\newcommand\abs[1]{\left|#1\right|}
\newcommand{\e}{{\rm e}}
\newcommand{\w}{h}
\newcommand{\dd}{{\rm d}}
\newcommand{\mean}[1]{\langle #1 \rangle}
\newtheorem{proposition}{Proposition}
\newtheorem{lemma}{Lemma}
\newtheorem{remark}{Remark}
\begin{document}

\title{Local clustering in scale-free networks with hidden variables}

 \author{Remco van der Hofstad}

\author{A.J.E.M. Janssen}
\author{Johan S.H. van Leeuwaarden}%
 \author{Clara Stegehuis}

\affil{Eindhoven University of Technology
}%


\date{\today}

\maketitle
\begin{abstract}
	We investigate the presence of triangles in a class of correlated random graphs in which hidden variables determine the pairwise connections between vertices. The class rules out self-loops and multiple edges and allows for negative degree correlations (disassortative mixing) due to infinite-variance degrees controlled by a structural cutoff $h_s$ and natural cutoff $h_c$. We show that local clustering decreases with the hidden variable (or degree). We also determine how the average clustering coefficient $C$ scales with the network size $N$, as a function of $h_s$ and $h_c$. For scale-free networks with exponent $2<\tau<3$ and the default choices $h_s\sim N^{1/2}$ and $h_c\sim N^{1/(\tau-1)}$ this gives $C\sim N^{2-\tau}\ln N$ for the universality class at hand. We characterize the extremely slow decay of $C$ when $\tau\approx 2$ and show that for $\tau=2.1$, say, clustering only starts to vanish for networks as large as $N=10^{9}$.
\end{abstract}

\section{Introduction}
Random graphs serve as models for large networked systems that arise in nature or in our technosphere.
The shear complexity of many such networks prevents a detailed microscopic modeling, which is why random graphs only use partial descriptions of networks, such as degree distributions. Statistical analysis of network data suggests that many networks possess a power-law degree distribution \cite{Clauset2009,Newm10a,Pastor2002,email2002}, where the probability $P(k)$ that a node has $k$ neighbors scales as $ k^{-\tau}$ for some characteristic exponent $\tau>0$. The power-law distribution leads to scale-free behavior such as short distances due to the likely presence of high-degree nodes. Many scale-free networks are reported to have an exponent $\tau$ between 2 and 3 \cite{AlbJeoBar99a,FalFalFal99,Jeoetal00}, so that the second moment of the degree distribution diverges in the infinite-size network limit.

Hidden variable models present a class of popular null models for scale-free networks \cite{park2004statistical,pol2012}.
In these models vertices are characterized by hidden variables that influence the creation of edges between pairs of vertices.
The models can be seen as enlarged ensembles of random graphs that can match in expectation any given degree distribution $P(k)$.
All topological properties, including correlations and clustering, then become functions of the distribution of the hidden variables and the probability of connecting vertices \cite{boguna2003,bollobas2007phase}. The independence between edges makes hidden-variable models analytically tractable. One complication though, is that these hidden-variable models come with a rather complicated cutoff scheme for scale-free networks with $\tau<3$. Indeed, to cope with diverging second moments, various cutoff schemes were proposed that remove the problematic large-degree vertices \cite{ChuLu01,catanzaro2005generation}. For clustering this can have a dramatic effect, for instance on degree-degree correlations that are typically of a disassortative nature: high-degree vertices tend to be connected to low-degree vertices \cite{park2004statistical,maslov2002specificity}. This negative correlation can have a strong influence on topological network properties, including clustering, defined as the presence of triangles in the network.

In \cite{pol2012} it was shown that hidden-variable models with a nonrestrictive cutoff scheme can generate nearly size-independent levels of clustering, and can thus generate networks with high levels of clustering, particularly for $\tau$ close to 2. Without banning large-degree vertices by installing a cutoff, the long tail of the power law makes that pairs of high-degree vertices would quite likely share more than one edge (after for instance a random assignment of edges). But hidden-variable models allow at most one edge between pairs of vertices, so that large-degree vertices must inevitably connect to small-degree vertices due to lack of available large-degree vertices. This phenomenon, in turn, is related to the difference in scaling between the so-called structural cutoff and natural cutoff. The structural cutoff is defined as the largest possible upper bound on the degrees required to guarantee single edges, while the natural cutoff characterizes the maximal degree in a sample of $N$ vertices. For scale-free networks with $\tau\in(2,3]$ the structural cutoff scales as $N^{1/2}$ while the natural cutoff scales as $N^{1/(\tau-1)}$ (see Section \ref{setup}), which gives rise to structural negative correlations and possibly other finite-size effects.



Clustering can be measured in various ways. The local clustering coefficient of a vertex is defined as the number of existing
edges among the set of its neighbors divided by the total number of possible connections between them. This can be interpreted as the probability that two randomly chosen neighbors of a vertex are neighbors themselves. One could calculate a global clustering coefficient as the total number of triangles (closed triples of three vertices) divided by the number of connected triples. Here we employ as a metric a different global clustering coefficient, the average clustering coefficient $C$, defined as the average (over vertices of degree $\geq 2$) of the local clustering coefficient of single vertices. For a node $i$ the local clustering coefficient is given by $c_i=2T_i/k_i(k_i-1)$ with $k_i$ the degree of node $i$ and $T_i$ the number of triangles that vertex $i$ is part of.

In the absence of high-degree nodes, the average clustering coefficient is given by \cite{Newm10a}
\begin{equation}\label{Ceq}
C=\frac1N \sum_{i=1}^N c_i=
\frac{\mean{k(k-1)}^2}{N\mean{k}^3},
\end{equation}
which shows that clustering vanishes very fast in the large network limit $N\to\infty$ in support of the tree-like approximations of complex networks. However, for scale-free distributions with $\tau<3$, the natural cutoff that scales as $N^{1/(\tau-1)}$ together with (\ref{Ceq}) gives $C\sim N^{(7-3\tau)/(\tau-1)}$. The diverging $C$ for $\tau<7/3$ is caused by the many edges between the high-degree vertices, and can be judged as anomalous or nonphysical behavior if one wants $C$ to be smaller than 1 and interpret it as a probability or proportion.
If a structural cutoff of order $N^{1/2}$ is imposed, hence banning the largest-degree nodes, formula (\ref{Ceq}) predicts the correct (in the sense that it matches simulations) scaling $N^{2-\tau}$ \cite{catanzaro2005generation,pol2012}.

In a power-law setting, the infinite variance is essential for describing scale-free network behavior, which makes the banning of large-degree vertices unnatural. In this paper we investigate average clustering for an ensemble of scale-free random graphs that allows for an interplay between structural correlations and large-degree nodes. The clustering coefficient in this ensemble turns out to depend on the size of the network, the structural cutoff that arises when conditioning on simplicity and the natural cutoff that accounts for large degrees.

%
%
%
%
%

\section{Hidden variables and cutoffs}\label{setup}

Given $N$ nodes, hidden-variable models are defined as follows: (i) associate to each node a hidden variable $h$ drawn from a given probability distribution function $\rho(h)$ and (ii) join each pair of vertices independently according to a given probability $p(h,h')$ with $h$ and $h'$ the hidden variables associated to the two nodes. The probability $p(h,h')$ can be any function of the hidden variables, as long as $p(h,h')\in[0,1]$. Many networks can be embedded in this hidden-variable framework, but particular attention goes to the case in which the hidden variables have themselves the structure of the degrees of a real-world network. One could interpret the hidden-variable model as yielding soft constraints on the degrees, rather than hard constraints often used in the configuration model \cite{Clauset2009,Newm10a,Pastor2002,email2002,Dhara16}.
Chung and Lu \cite{ChuLu01} introduced this model in the form
\begin{equation}\label{c1}
p(h,h')\sim \frac{h h'}{N \mean{h}},
\end{equation}
so when allowing self-loops and multiple edges, the expected degree of a node equals its hidden variable.
For \eqref{c1} to make sense we need that the maximal value of the product $h h'$ never exceeds $N \mean{h}$ and this can be guaranteed by
the assumption that the hidden degree $h$ is smaller than the structural cutoff $h_s=\sqrt{N\mean{h}}$.
While this restricts $p(h,h')$ within the interval $[0,1]$, the structural cutoff strongly violates the reality of scale-free networks. Regarding the hidden variables as the desired degrees in the CM, the structural cutoff conflicts with the fact that the natural cutoff for the degree scales as $N^{1/(\tau-1)}$.

In \cite{boguna2003,park2004statistical,bollobas2007phase} more general hidden-variable models were introduced, constructed  to preserve conditional independence between edges, while making sure there is only one edge between every vertex pair and that the natural extreme values of power-law degrees are not neglected. Within that large spectrum of models, we focus on
the subset of models for which
\begin{equation}\label{rel}
 P(k)= \int_h \frac{\e^{-h} h^k}{k!} \rho(h) \dd h \sim \rho(k).
\end{equation}
 The class of models considered in this paper starts from the ansatz $p(h,h')\approx {h h'}/{N\mean{h}}$, but like \cite{boguna2003,park2004statistical,bollobas2007phase,britton2006generating,NorRei06} \textit{adapts} this setting to incorporate extreme values and to rule out self-loops.

\subsection{Class of random graphs}\label{class}

Within the wide class of hidden-variable models  \cite{boguna2003,park2004statistical,bollobas2007phase} we consider probabilities of the form
\begin{equation}\label{c2}
p(h,h')=r(u)=u f(u) \quad {\rm with} \quad u=\frac{h h'}{h_s^2}
\end{equation}
with functions $f:[0,\infty)\to (0,1]$ that belong to the F-class spanned by the properties
\begin{itemize}
\item[F1] $f(0)=1$, $f(u)$ decreases to $0$ as $u\to\infty$.
\item[F2] $r(u)=uf(u)$ increases to $1$ as $u\to\infty$.
\item[F3] $f$ is continuous and there are $0=u_0<u_1<\ldots< u_K<\infty$ such that $f$ is twice differentiable on each of the intervals $[u_{k-1},u_k]$ and on $[u_K,\infty)$, where
\begin{equation}
f'(u_k)=\frac 12 f'(u_k+0)+\frac12f'(u_k-0)
\end{equation}
for $k=1,\dots,K$ and $f'(0)=f'(+0)$.

%
\item[F4] $-uf'(u)/f(u)$ is increasing in $u\geq 0$. 
\end{itemize}
The class of hidden-variable models considered in this paper is completely specified by all functions $f$ that satisfy F1-F4.
Here are important classical members of the F-class:
\begin{itemize}
\item[(i)] ({\it maximally dense graph}) The Chung-Lu setting
\begin{equation}\label{ex1}
r(u)=\min\{u,1\}.
\end{equation}
This is the default choice in \cite{bollobas2007phase} and leads within the F-class to the densest random graphs.
\item[(ii)]({\it Poisson graph}) A simple exponential form gives
\begin{equation}\label{ex2}
r(u)=1-\e^{-u}.
\end{equation}
Here we take $u$ to define the intensities of Poisson processes of edges, and ignore multiple edges, so
that \eqref{ex2} gives the probability that there is an edge between two vertices. Variants of this form are covered in
e.g.~\cite{bollobas2007phase,NorRei06,BhaHofLee09a,BhaHofLee09b}.
\item[(iii)] ({\it maximally random graph}) The next function was considered in \cite{park2004statistical,pol2012,squartini2011analytical}:
\begin{equation}\label{ex3}
r(u)=\frac{u}{1+u}.
\end{equation}
This connection probability ensures that the entropy of the ensemble is maximal \cite{pol2012}. This random graph is also known in the literature as the generalized random graph \cite{britton2006generating,Hofs15}.
\end{itemize}

The conditions F1-F4 will prove to be the minimally required conditions for the results that we present for the clustering coefficient. The F-class is constructed so that it remains amenable to analysis; the technique developed in \cite{pol2012} to characterize the average clustering coefficient despite the presence of correlations can be applied to our class. Notice that the technical condition F3 allows to consider piecewise smooth functions with jumps in their derivatives, such as $\min (1,1/u)$ that comes with \eqref{ex1}.
It can be shown that F4 is slightly stronger than the condition of concavity of $r(u)$. It appears in Section~\ref{sec:univ} that F4 is necessary and sufficient for monotonicity in a general sense of the local clustering coefficient $c(h)$.

\subsection{Cutoffs and correlation}\label{cutoff}

The hidden-variable model by definition excludes self-loops and avoids multiple connections after imposing the structural cutoff $h_s\sim N^{1/2}$. Since the natural cutoff is of the order $h_c\sim N^{1/(\tau-1)}$, for $\tau\geq 3$ the structural cutoff dominates and correlations are avoided. For $\tau<3$, however, the structural cutoff is smaller than the natural cutoff predicted by extreme value theory. All actual cutoffs larger than $h_s$ will then result in a network with a structure that can only be analyzed by considering non-trivial degree-degree correlations.

The structural cutoff $h_s$ now marks the point as of which correlations imposed by the network structure arise. All pairs of vertices with hidden variables smaller than this cutoff are connected with probability close to $u=h h'/h_s^2$ and do not show degree-degree correlation. The extent to which the network now shows correlation is determined by the gap between the natural cutoff $h_c$ and the structural cutoff $h_s$. A fully uncorrelated network arises when $h_c<h_s$, while correlation will be present when $h_c>h_s$.
Let $\mean{h}$ denote the average value of the random variable $h$ with density $\rho(h)=Ch^{-\tau}$ on $[h_{\min},N]$, so that
\begin{equation}\label{eq:meanh}
\mean{h}=\frac{\int_{h_{\min}}^{N}h^{1-\tau}\dd h}{\int_{h_{\min}}^{N}h^{-\tau}\dd h} = \frac{\tau-1}{\tau-2}\frac{h_{\min}^{2-\tau}-N^{2-\tau}}{h_{\min}^{1-\tau}-N^{1-\tau}}.
\end{equation}
With the default choices
\begin{align}
h_s&=\sqrt{N\mean{h}},\label{def2}\\
h_c&=(N\mean{h})^{1/(\tau-1)},\label{def3}
\end{align}
in mind, the regime in terms of cutoffs we are interested in is, just as in \cite{pol2012},
\begin{equation}\label{desiredregime}
h_s\leq h_c\ll h_s^2,
\end{equation}
where we regard these cutoffs as indexed by $N$ and consider what happens as $N\to\infty$, with emphasis on the asymptotic regime
$h_s\ll h_c$ for $N$ large.

In Appendix~\ref{sec:natcut} we show that $h_c$ as given in~\eqref{def3} is an accurate approximation of $\mathbb{E}[\max(\underline{h}_1,\dots,\underline{h}_N)]$, where the $\underline{h}_i$ are i.i.d. with $\rho(h)$ as density.

\section{Universal properties}\label{sec:univ}
For the class of hidden-variable models described in Section \ref{class}, we will characterize the large-network asymptotics of the local clustering coefficient $c(h)$ and average clustering coefficient $C$. The first result in this direction for $C$ was obtained for a class of uncorrelated random scale-free networks with a cutoff of $N^{1/2}$ \cite{catanzaro2005generation} for which $C$ turned out to scale as $N^{2-\tau}$, a decreasing function of the network size for $\tau>2$. In \cite{pol2012} the more general setup discussed in Section \ref{setup} was used, with the specific choice of $r(u)=u/(1+u)$. After involved calculations with Lerch's transcendent, \cite{pol2012} revealed the scaling relation
\begin{equation}\label{polsim}
C\sim h_s^{-2(\tau-2)}\ln(h_c/h_s).
\end{equation}
For the default choices \eqref{def2} and \eqref{def3} this predicts $C\sim N^{2-\tau}\ln N$ (ignoring the constant).

We adopt the hidden variables formalism developed in \cite{boguna2003} that leads, among other things, to explicit expressions for
the local clustering coefficient $c(\w)$ of a node with hidden variable $\w$ and for
the average clustering coefficient $C$.

The clustering coefficient of a vertex with hidden variable $h$ can be interpreted as the probability that two randomly chosen edges from $h$ are neighbors. The clustering of a vertex of degree one or zero is defined as zero. Then, if vertex $h$ has degree at least two,
\begin{equation}\label{}
c(h)=\int_{h_{\min}}^{h_c}\int_{h_{\min}}^{h_c} p(h'|h)p(h',h'')p(h''|h)\dd h'\dd h''
\end{equation}
with $p(h'|h)$ the conditional probability that a randomly chosen edge from an $h$ vertex is connected to an $h'$ vertex given by
\begin{equation}\label{}
p(h'|h)=\frac{\rho(h')p(h,h')}{\int_{h''} \rho(h'')p(h,h'')\dd h''}.
\end{equation}
Furthermore, the probability that a vertex with hidden variable $h$ has degree at least two is given by
	\begin{equation}
	\mathbb{P}(k\geq 2\mid h)=\sum_{k=2}^\infty \frac{h^k\e^{-k}}{k!}=1-\e^{-h}-h\e^{-h}.
	\end{equation}
Therefore, for $\rho(h)\sim h^{-\tau}$ \cite[Eq.~(29)]{boguna2003}
\begin{equation}\label{}
\begin{aligned}
&c(\w)=(1-\e^{-h}-h\e^{-h}) \frac{\int_{{h_{\rm min}}}^{h_c}\int_{{h_{\rm min}}}^{h_c}\rho(\w')p(\w,\w')\rho(\w'')p(\w,\w'')p(\w',\w'')\dd \w'\dd \w''}{\Big[\int_{{h_{\rm min}}}^{h_c}\rho(\w')p(\w,\w')\dd \w'\Big]^2},
\end{aligned}
\end{equation}
and hence
\begin{align}\label{eq:intC}
C=\int_{h_{\min}}^{h_c} \rho(h) c(h) \dd h.
\end{align}
The degree of a vertex conditioned on its hidden variable $h$ is distributed as a Poisson random variable with parameter $h$~\cite{boguna2003} and~\cite[Chapter 6]{Hofs15}.
Note that the Poisson distribution is sharply peaked around $k=h$, which for large $k$ yields
\begin{align}\label{}
P(k)\sim \rho(k) \quad {\rm and} \quad \bar c(k) \sim c(k),
\end{align}
where $\bar{c}(k)$ denotes the average clustering coefficient over all vertices of degree $k$, so that the hidden variables become hidden degrees.

We make the change of variables
\begin{equation}\label{eq:change}
a=\frac{1}{h_s}, \quad b=\frac{h_c}{h_s}
\end{equation}
and assume henceforth, in line with \eqref{desiredregime}, that
\begin{equation}\label{assonab}
0<a h_{\min}\leq a h_{\min}b \leq 1\leq b<\infty, \quad 2<\tau<3.
\end{equation}
This gives $c(h)=(1-\e^{-h}-h\e^{-h})c_{ab}(h)$ with
\begin{equation}\label{mainc}
c_{ab}(\w)=\frac{\int_{a {h_{\rm min}}}^{b}\int_{a {h_{\rm min}}}^{b} (xy)^{-\tau}r(a \w x)r(a\w y)r(xy)\dd x\dd y}{\Big[\int_{a {h_{\rm min}} }^{b}x^{-\tau}r(a \w x)\dd x\Big]^2}.
\end{equation}
Note that within the domain of integration $[ah_{\min},b]$ in Eq.~\eqref{mainc} the arguments $a \w x$ and $a \w y$ do not exceed a maximum value $O(ab)$ as long as $\w<h_s^2/h_c$, which tends to zero under assumption \eqref{desiredregime}.
Therefore, since $r(u)\approx u$,  $c_{ab}(\w)\approx c_{ab}(0)$ for $\w < h_s^2/h_c$. When choosing $h_s$ as in~\eqref{def2}, this means that $c_{ab}(\w)\approx c_{ab}(0)$ for $\w\leq  N\mean{h}/h_c$. 
In Proposition~\ref{thm2} below we will prove that $h\mapsto c_{ab}(\w)$ is a bounded monotonically decreasing function for the class of models at hand. 
Furthermore, the density $\rho(h)\sim h^{-\tau}$ with $\tau\geq 2$, decays sufficiently rapidly for the integral in~\eqref{eq:intC} for $C$ to have converged already before $c_{ab}(h)$ starts to drop significantly below its value $c_{ab}(0)$ at $h=0$. Thus, $C$ can be approximated with
\begin{equation}\label{e2}
\begin{aligned}[b]
C_{ab}(\tau)&=c_{ab}(0)\int_{h_{\min}}^{N}\rho(\w)(1-(1+\w)\e^{-\w})\dd\w:=c_{ab}(0)A(\tau),
\end{aligned}
\end{equation}
where we have conveniently extended the integration range to the $\tau$-independent interval $[h_{\min},N]$ at the expense of a negligible additional error.

\begin{proposition}\label{thm2}
Assume that $f$ satisfies {\rm F1 - F3}. Then $c_{ab}(h)$ is decreasing in $\w\geq 0$ for all $a,b$ with $0<a< b$ if and only if $f$ satisfies {\rm F4}.
\end{proposition}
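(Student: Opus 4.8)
The plan is to reduce the statement to a monotonicity question for a one-parameter family of probability measures and to express the derivative of $c_{ab}$ as a covariance whose sign is governed precisely by F4. Writing $t=a\w$ and $\dd\mu(x)=x^{-\tau}\dd x$ on $[ah_{\min},b]$, I would first observe that $c_{ab}(\w)=g(t)$ with
\[
g(t)=\frac{\int\!\!\int r(tx)r(ty)r(xy)\,\dd\mu(x)\dd\mu(y)}{\big[\int r(tx)\,\dd\mu(x)\big]^2}=\mathbb{E}_{\nu_t\otimes\nu_t}\big[r(XY)\big],
\]
where $\nu_t$ is the probability measure with density proportional to $r(tx)\,\dd\mu(x)$ and $X,Y$ are i.i.d.\ $\nu_t$. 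Since monotonicity in $\w\ge0$ is the same as monotonicity in $t\ge0$ (the endpoint $t=0$ being covered by continuity), it suffices to determine the sign of $g'(t)$ for $t>0$.

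Next I would differentiate $g$ through the log-density of $\nu_t\otimes\nu_t$. Using $r(u)=uf(u)$, so that $ur'(u)/r(u)=1+\phi(u)$ with $\phi(u):=uf'(u)/f(u)$, a short computation gives
\[
g'(t)=\frac{2}{t}\,\operatorname{Cov}_{\nu_t\otimes\nu_t}\!\big(r(XY),\,\phi(tX)\big),
\]
so the whole statement comes down to the sign of this covariance, and F4 is exactly the requirement that $-\phi$ be increasing, i.e.\ that $\phi$ be decreasing. Conditioning on $Y$ removes the between-$Y$ contribution, because $\mathbb{E}[\phi(tX)\mid Y]$ is constant, leaving $\operatorname{Cov}_{\nu_t\otimes\nu_t}(r(XY),\phi(tX))=\mathbb{E}_Y\big[\operatorname{Cov}_X(r(XY),\phi(tX))\big]$. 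For each fixed $y$ I would then use the i.i.d.-copy identity $\operatorname{Cov}_X(u_1(X),u_2(X))=\tfrac12\mathbb{E}[(u_1(X)-u_1(X'))(u_2(X)-u_2(X'))]$, which keys the sign to how $r(\cdot\,y)$ and $\phi(t\,\cdot)$ co-vary.

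For the ``if'' direction, $x\mapsto r(xy)$ is increasing (this is F2) while $x\mapsto\phi(tx)$ is decreasing when F4 holds; hence the two differences in the identity always carry opposite signs, the integrand is $\le0$, and $\operatorname{Cov}_X\le0$ for every $y$. Therefore $g'(t)\le0$ and $c_{ab}$ is decreasing for every admissible $a,b$. For the ``only if'' direction I would argue by contraposition: if F4 fails then $\phi$ takes a strictly larger value at some larger argument, so there is an interval (possibly straddling one of the kink points $u_k$ of F3) on which $\phi$ increases. I would then choose $a,b$ and $\w$ so that the support $[ah_{\min},b]$ together with the scaling $t=a\w$ places $t\cdot[ah_{\min},b]$ inside that interval; there both $r(\cdot\,y)$ and $\phi(t\,\cdot)$ are increasing, the integrand in the i.i.d.-copy identity is $\ge0$ and strictly positive on a set of positive measure, so $\operatorname{Cov}_X>0$ and $g'(t)>0$, showing $c_{ab}$ is not decreasing.

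The main obstacle I anticipate is rigor in two places. First, the differentiation under the integral sign that yields the covariance formula must be justified given that F3 only grants piecewise $C^2$ regularity, so that $\phi$ may jump at the finitely many points $u_k$; the pairwise-difference form of the covariance is robust to such jumps (an upward jump of $\phi$ still contributes positively), which is why I prefer it to a Taylor expansion, though the averaging convention $f'(u_k)=\tfrac12(f'(u_k+0)+f'(u_k-0))$ still has to be threaded through. Second, in the ``only if'' part one must check that the localized counterexample is admissible, i.e.\ that the interval witnessing the increase of $\phi$ can indeed be realized as $t\cdot[ah_{\min},b]$ for some $0<a<b$ and $\w\ge0$; this amounts to matching the ratio $b/(ah_{\min})$ to the width of the increasing region and then tuning $t=a\w$, and is where the fixed value of $h_{\min}$ constrains the admissible intervals.
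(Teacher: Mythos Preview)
Your argument is correct and essentially identical to the paper's: your measure $\nu_t$ is the paper's pdf $U$, your conditioning on $Y$ reproduces (up to normalization) the paper's ratio $T/U$, and the covariance/FKG inequality you invoke is exactly their Lemma~1 (a likelihood-ratio comparison). For the ``only if'' direction the paper follows the same localization idea as your sketch but is more explicit about the two points you flag as obstacles: it first fixes $[a,b]$ inside a region where $uf(u)$ is \emph{strictly} increasing (to guarantee strict positivity of the covariance) and then tunes $h$ independently, and it treats the case where $-\phi$ fails monotonicity only through a jump in a separate appendix via a refinement of the same lemma.
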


Proposition \ref{thm2} shows that for large enough $h$ local clustering decreases with the hidden variable. For the default choices  with \eqref{ex1}, \eqref{def2}, \eqref{def3}, local clustering $c(\w)$ is plotted in Figure \ref{fig1}, which shows both exact formulas and extensive simulations. Because our model starts from a single-edge constraint, Proposition \ref{thm2} also provides support for the asserted dissassortative mixing observed in many technological and biological networks \cite{park2004statistical,maslov2002specificity}.


\begin{proposition}\label{thm1}
Assume that $f$ is positive, and satisfies {\rm F3}. Then $c_{ab}(0)$ is decreasing in $\tau>0$ for all $a,b$ with $0<a\leq b<\infty$ if and only if $f$ satisfies {\rm F2}.
\end{proposition}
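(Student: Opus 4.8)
The plan is to recognise $c_{ab}(0)$ as an expectation and its $\tau$-derivative as a covariance. First I would pass to the limit $h\to 0$ in \eqref{mainc}. Since $r(u)=uf(u)$ with $f(0)=1$, one has $r(ahx)/(ah)=x\,f(ahx)\to x$ as $h\to 0$, so the common factor $(ah)^2$ cancels between numerator and denominator, giving
\begin{equation}\label{eq:c0}
c_{ab}(0)=\frac{\int_{ah_{\min}}^{b}\int_{ah_{\min}}^{b}(xy)^{1-\tau}\,r(xy)\,\dd x\,\dd y}{\Big[\int_{ah_{\min}}^{b}x^{1-\tau}\,\dd x\Big]^{2}}.
\end{equation}
Writing $\alpha=ah_{\min}$, $\beta=b$, and letting $X,Y$ be i.i.d.\ with density $p_\tau(x)\propto x^{1-\tau}$ on $[\alpha,\beta]$, the right-hand side of \eqref{eq:c0} is exactly $\mathbb{E}_\tau[r(Z)]$ with $Z=XY$; all integrals are finite because $[\alpha,\beta]$ is compact and $\alpha>0$.

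Next I would differentiate in $\tau$ under the integral sign, which is legitimate on a compact interval. Since $\partial_\tau\ln p_\tau(x)=-\ln x-D'(\tau)/D(\tau)$ with $D(\tau)=\int_\alpha^\beta x^{1-\tau}\,\dd x$ and $D'(\tau)/D(\tau)=-\mathbb{E}_\tau[\ln X]$, the logarithmic $\tau$-derivative of each factor equals $-(\ln x-\mathbb{E}_\tau[\ln X])$. Carrying this through the product density $p_\tau(x)p_\tau(y)$ and using symmetry in $X,Y$ yields the key identity
\begin{equation}\label{eq:cov}
\frac{\dd}{\dd\tau}\,c_{ab}(0)=-\,\mathrm{Cov}_\tau\big(\ln Z,\,r(Z)\big),\qquad Z=XY .
\end{equation}
Both $\ln Z$ and, under F2, $r(Z)$ are then nondecreasing functions of the single random variable $Z$.

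For the ``if'' direction I would invoke the one-dimensional FKG (Chebyshev) covariance inequality: if $\varphi,\psi$ are both nondecreasing, then $\mathrm{Cov}(\varphi(Z),\psi(Z))=\tfrac12\,\mathbb{E}\big[(\varphi(Z)-\varphi(Z'))(\psi(Z)-\psi(Z'))\big]\ge 0$ for an independent copy $Z'$, the integrand being a product of two factors of equal sign. Applying this with $\varphi=\ln$ and $\psi=r$ gives $\mathrm{Cov}_\tau(\ln Z,r(Z))\ge 0$, so by \eqref{eq:cov} the map $\tau\mapsto c_{ab}(0)$ is decreasing, and this holds for every admissible pair $0<a\le b$ simultaneously.

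For the ``only if'' direction I would argue by contraposition. If $r$ fails to be nondecreasing then, since F3 makes $r$ piecewise $C^2$, there is an open interval $(s,t)\subset(0,\infty)$ on which $r'<0$. Choosing $a,b$ so that $Z=XY$ is supported in $(s,t)$ — it suffices that $[\alpha^2,\beta^2]=[(ah_{\min})^2,b^2]\subset(s,t)$, which for $h_{\min}\ge 1$ is realised by any $\sqrt s<\alpha<\beta<\sqrt t$ through $a=\alpha/h_{\min}$, $b=\beta$ — forces $r(Z)$ to be strictly decreasing where $\ln Z$ is strictly increasing. The strict form of the covariance inequality for the nondegenerate $Z$ then gives $\mathrm{Cov}_\tau(\ln Z,r(Z))<0$, so by \eqref{eq:cov} $c_{ab}(0)$ is \emph{increasing} in $\tau$, contradicting the hypothesis. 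I expect the main obstacle to be the clean derivation of the covariance identity \eqref{eq:cov}; once the $\tau$-derivative is seen to be minus the covariance of two comonotone functions of $Z$, both implications reduce to the monotone-covariance inequality, the only delicate point in the converse being the use of F3 to locate a genuine subinterval of strict decrease and realising it through the constraint $\alpha=ah_{\min}$, $\beta=b$. Note that the feature of F2 used throughout is precisely the monotone increase of $r=uf$.
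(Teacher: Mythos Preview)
Your argument is correct and is essentially the paper's proof in different packaging: both differentiate in $\tau$, reduce the sign of the derivative to a comonotone-covariance inequality, and handle the converse by localising $a,b$ to an interval on which $r=uf$ is strictly decreasing. The only cosmetic difference is that you apply Chebyshev/FKG directly to the product variable $Z=XY$, whereas the paper first integrates out $Y$ to get a marginal density $W(x)\propto p_\tau(x)\,\mathbb{E}[r(xY)]$ and then invokes a likelihood-ratio lemma (Lemma~\ref{lemma1}) with $g=\ln$; since $\mathrm{Cov}(\ln Z,r(Z))=2\,\mathrm{Cov}(\ln X,r(XY))$ by symmetry, the two reductions coincide.
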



Proposition~\ref{thm1} gives evidence for the fact that clustering increases as
$\tau$ decreases, as confirmed in Figure~\ref{fig2}.
More precisely, Proposition~\ref{thm1} shows the monotonicity of $c_{ab}(0)$, which is one of the factors of $C_{ab}(\tau)$ in~\eqref{e2}.
The issue of monotonicity of $C_{ab}(\tau)$ is more delicate, since $a$ and $b$ are function of $\tau$ themselves.
In Appendix~\ref{sec:monotone} we present several other monotonicity properties of the remaining building blocks that together give $C_{ab}(\tau)$. It follows that for $\tau>2$, $C_{ab}(\tau)$ is bounded from above by an envelope function of $\tau$ that is very close to $C_{ab}(\tau)$  and that is decreasing in $\tau$.
%
\begin{figure}[t]
	\begin{minipage}[t]{0.45\linewidth}
		\centering
		\includegraphics[width=\textwidth]{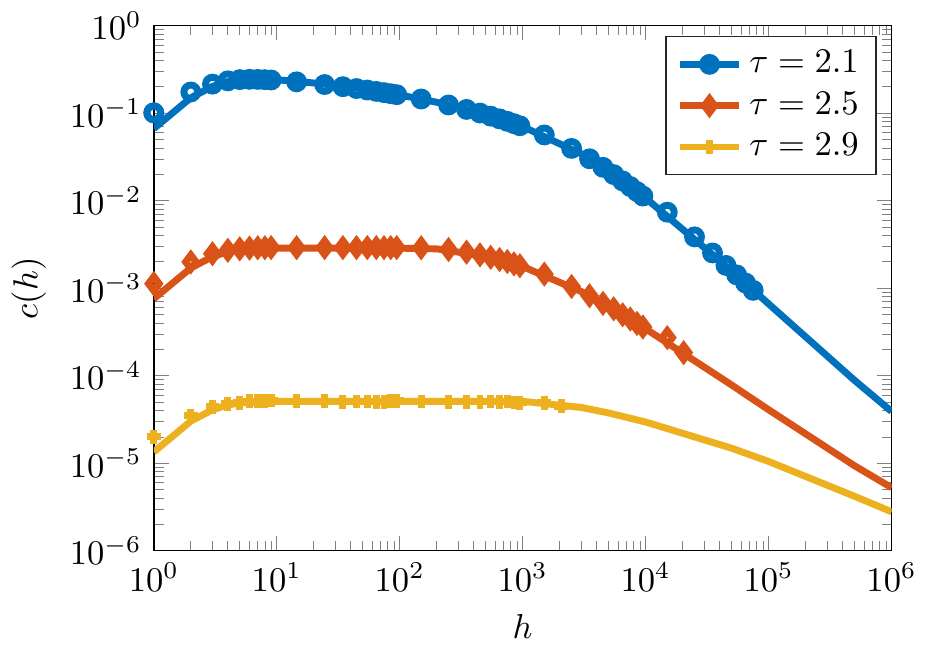}
		\caption{$c(\w)$ for $\tau=2.1,2.5,2.9$ and networks of size $N=10^6$, using $h_{\min}=1$. The markers indicate the average of $10^5$ simulations, and the solid lines follow from~\eqref{e2}.}
		\label{fig1}
	\end{minipage}
	\hfill
	\begin{minipage}[t]{0.45\linewidth}
		\centering
		\includegraphics[width=\textwidth]{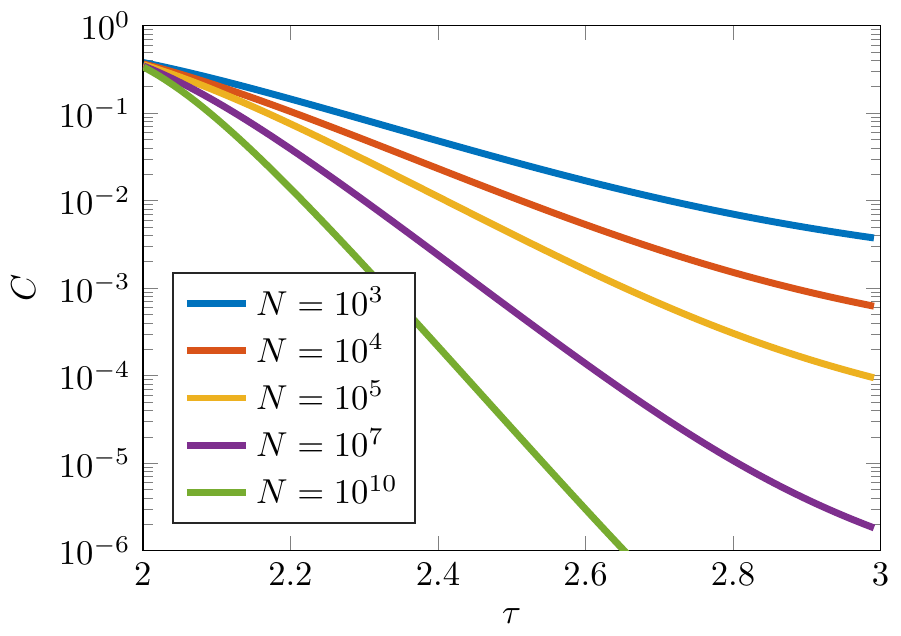}
		\caption{$C_{ab}^{\max}(\tau)$ as a function of $\tau$, with $r$ as in \eqref{ex1}, using $h_{\min}=1$.}
		\label{fig:Ctau}
	\end{minipage}
\end{figure}
Figure~\ref{fig:Ctau} provides empirical evidence for the monotonicity of $C_{ab}(\tau)$ in $\tau$.  This monotonicity seems to conflict observations made in~\cite{pol2012}, where the clustering coefficient of a hidden-variable model first increases in $\tau$ when $\tau$ is close to 2, and then starts decreasing. The difference is caused by the choice of the structural cutoff. Where we take $h_s=\sqrt{N\mean{h}}$ with $\mean{h}$ as in~\eqref{eq:meanh}, in~\cite{pol2012} $h_s=\sqrt{N(\tau-1)/(\tau-2)}$ was used. Thus, in~\cite{pol2012}, the structural cutoff includes the infinite system size limit of $\mean{h}$, where we use the size-dependent version of $\mean{h}$. 

Figure~\ref{fig2} suggests that $C$ falls off with $N$ according to a function $N^{\delta}$ where $\delta$ depends on $\tau$.
In Proposition~\ref{propCmax} below, we will show that for the F-class of hidden-variable models and the standard cutoff levels, $C$ decays as $N^{\tau-2}\ln N$. On a log-scale, moreover, the clustering coefficient of different hidden-variable models in the F-class only differs by a constant, which is confirmed in Figure~\ref{fig2} and substantiated in Proposition~\ref{propCmaxbound}.
Then, we will focus in Section~\ref{sec:pers} on $\tau\approx 2$, for which Figure~\ref{fig2} suggests that the clustering remains nearly constant as a function of $N$, and characterize how large a network should be for $C$ to start showing decay. This again will depend on $\tau$.

\begin{figure}[t]
	\centering
	\includegraphics[width=0.45\textwidth]{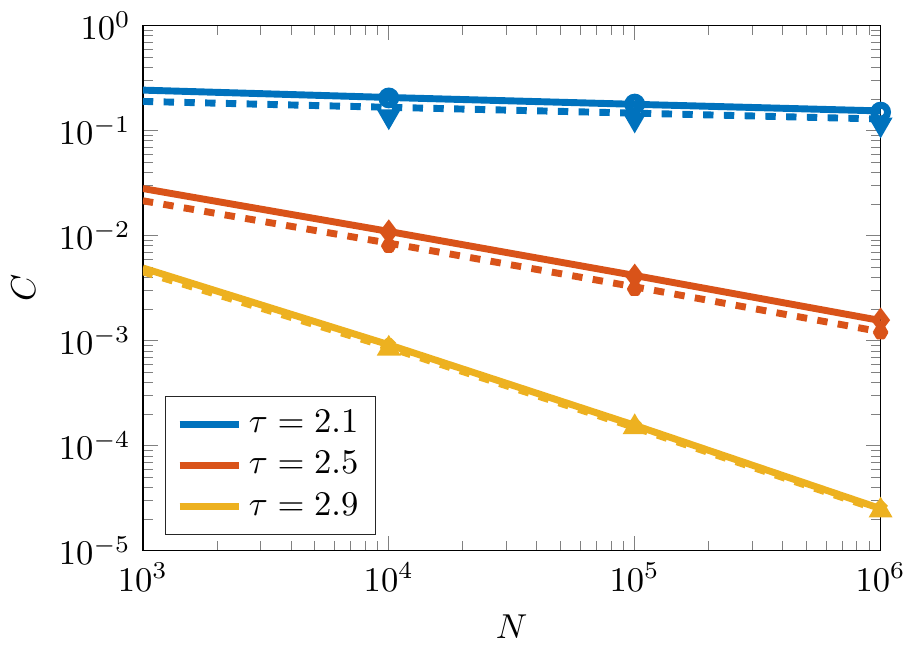}
	\caption{$C_{ab}(\tau)$ for $\tau=2.1,2.5,2.9$, choices \eqref{ex1} (solid line) and \eqref{ex3} (dashed line) and networks of size $N=10^k$ for $k=4,5,6$, using $h_{\min}=1$. The markers indicate the average of $10^5$ simulations, and the solid lines follow are~\eqref{frontt} and~\eqref{newpol}.}
	\label{fig2}
\end{figure}

\section{Universal bounds}\label{sec:max}
We next compute the clustering coefficient $C_{ab}^{\rm max}(\tau)=C_{ab}(\tau)$ for the maximally dense graph with $f(u)=f_{\rm max}(u)=\min(1,1/u)$, $u\geq 0$ and $a,b$ satisfying \eqref{assonab}.
In this case we have $c_{ab}(h)=c_{ab}(0)$ for $h\leq 1/(ab)= h_s^2/h_c=N\mean{h}/h_c$. It is easy to see that $f_{\rm max}$
is the maximal element in the F-class in the sense that $f(u)\leq f_{\rm max}(u)$ for all $u\geq 0$ and all $f\in{\rm F}$. For a general $f\in{\rm F}$  we shall also bound $C_{ab}^f(\tau)$ in terms of $C_{ab}^{\rm max}(\tau)$. This yields a scaling relation similar to \eqref{polsim}, but then for the whole F-class. We start from an explicit representation for $C_{ab}^{\rm max}(\tau)$:

\begin{proposition}\label{propCmax}
\begin{align}\label{frontt}
C_{ab}^{\rm max}(\tau)=\frac{A(\tau)(\tau-2)^2}{\left((ah_{\min})^{2-\tau}-b^{2-\tau}\right)^2}
\times I_{ab}^{\rm max}(\tau),
\end{align}
with $A(\tau)$ given in~\eqref{e2} and
\begin{align}\label{c12}
I_{ab}^{\rm max}(\tau)&=
\frac{\ln (b^2)}{(\tau-2)(3-\tau)}
-\frac{1-b^{2(2-\tau)}}{(\tau-2)^2}+\frac{1-2(ah_{\min}b)^{3-\tau}+(ah_{\min})^{2(3-\tau)}}{(3-\tau)^2}.
\end{align}
\end{proposition}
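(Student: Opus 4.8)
The plan is to unwind the factorization $C_{ab}^{\rm max}(\tau)=c_{ab}(0)A(\tau)$ from \eqref{e2} and reduce the whole statement to one explicit double integral. First I would specialize \eqref{mainc} to $h=0$. Since $r(u)=uf(u)$ with $f(0)=1$ we have $r(ahx)\sim ahx$ as $h\downarrow 0$, so the common factor $(ah)^2$ appears in both numerator and denominator and cancels, leaving
\begin{equation}
c_{ab}(0)=\frac{\int_{ah_{\min}}^{b}\int_{ah_{\min}}^{b}(xy)^{1-\tau}r(xy)\,\dd x\,\dd y}{\Big[\int_{ah_{\min}}^{b}x^{1-\tau}\,\dd x\Big]^{2}}.
\end{equation}
For $f=f_{\rm max}$ this identity is in fact exact on the range $h\le 1/(ab)$ noted just before the proposition, because $r(u)=\min(u,1)$ is linear there. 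The denominator is elementary, $\int_{ah_{\min}}^{b}x^{1-\tau}\,\dd x=((ah_{\min})^{2-\tau}-b^{2-\tau})/(\tau-2)$, and its reciprocal square produces precisely the prefactor $A(\tau)(\tau-2)^2/((ah_{\min})^{2-\tau}-b^{2-\tau})^2$ of \eqref{frontt}. It then remains only to show that the numerator equals $I_{ab}^{\rm max}(\tau)$.

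The core computation is the double integral $I_{ab}^{\rm max}(\tau)=\int_{ah_{\min}}^{b}\int_{ah_{\min}}^{b}(xy)^{1-\tau}\min(xy,1)\,\dd x\,\dd y$, for which I would exploit the piecewise shape of $\min(xy,1)$ by splitting the square $[ah_{\min},b]^2$ along the hyperbola $xy=1$: below it the integrand is $(xy)^{2-\tau}$ and above it it is $(xy)^{1-\tau}$. The ordering \eqref{assonab}, $ah_{\min}\le ah_{\min}b\le 1\le b$, is exactly what pins down how the hyperbola sits inside the square. Fixing $x$ and integrating in $y$ first, the threshold $y=1/x$ lies above the square (so $\min(xy,1)=xy$ throughout) when $x\le 1/b$, and falls inside the $y$-range when $1/b<x\le b$; I would treat these two $x$-ranges separately and then integrate the resulting one-variable expressions in $x$. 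All inner integrals are of the form $\int y^{2-\tau}\dd y$ or $\int y^{1-\tau}\dd y$, both finite since $2<\tau<3$.

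The one subtlety worth flagging is the origin of the logarithm. In the range $1/b<x\le b$ the two inner contributions each generate a term proportional to $x^{-1}$; these do not cancel but combine to a nonzero multiple of $x^{-1}$, whose integral over $[1/b,b]$ is what yields the $\ln(b^2)$ term in \eqref{c12}, while every other term is a pure power of $b$ or of $ah_{\min}$. The final step is bookkeeping: the $(3-\tau)^{-2}$ pieces coming from both $x$-ranges consolidate into $\big(1-2(ah_{\min}b)^{3-\tau}+(ah_{\min})^{2(3-\tau)}\big)/(3-\tau)^2$, the $x^{-1}$ term gives $\ln(b^2)/((\tau-2)(3-\tau))$, and the $b^{2-\tau}$-weighted term gives $-(1-b^{2(2-\tau)})/(\tau-2)^2$, reproducing \eqref{c12}. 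The main obstacle is therefore not any single integral but the correct geometric placement of the hyperbola under \eqref{assonab}, the identification of the non-cancelling $x^{-1}$ contribution as the unique source of the logarithm, and the clean consolidation of the roughly dozen resulting power-and-log terms into the compact form \eqref{c12}.
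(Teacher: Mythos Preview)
Your proposal is correct and follows essentially the same route as the paper's own proof: specialize \eqref{mainc} at $h=0$, evaluate the denominator $\big(\int_{ah_{\min}}^{b}x^{1-\tau}\dd x\big)^2$ to obtain the prefactor, and compute the numerator by splitting the square $[ah_{\min},b]^2$ along $xy=1$ via the decomposition $x\in[ah_{\min},1/b]$ versus $x\in[1/b,b]$ (with the inner split at $y=1/x$ in the latter range), using \eqref{assonab} to place the hyperbola. Your identification of the two noncancelling $x^{-1}$ contributions as the sole source of $\ln(b^2)$ and your grouping of the remaining power terms into the three summands of \eqref{c12} match the paper's computation line for line.
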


In Appendix \ref{sec:pol} we give the counterpart of \eqref{frontt} for the maximally random graph \eqref{ex3} studied in \cite{pol2012}, and show that on a log-scale the leading asymptotics differs only by a constant, so that the decay exponent describing how the clustering decays with network size is the same. This can also be seen in Figure~\ref{fig2}. In fact, for all functions $f$ in the F-class we show below that the decay exponent is universal, and that the difference in constants can be bounded.

When $\tau$ is away from $2$ and $3$, and $b$ is large and $a$ is small, we can ignore the $b^{2-\tau}$ in the front factor of \eqref{frontt} and the second term in \eqref{c12}. Furthermore, $ab=O(N^{\frac{2-\tau}{\tau-1}})$, so that we may ignore this factor in the third term of~\eqref{c12}. In this case we get the approximation
\begin{align}\label{mainapprox}
C_{ab}^{\rm max}(\tau)\approx A(\tau)\frac{\tau-2}{3-\tau}(ah_{\min})^{2(\tau-2)}\ln (b^2).
\end{align}
Using the default choices for $a$ and $b$ from~\eqref{def2},~\eqref{def3} and~\eqref{eq:change} then shows that $C_{ab}^{\max}\sim N^{2-\tau}\ln(N)$ (ignoring the constant).
Proposition~\ref{propCmax}
can be used to find upper and lower bounds for $C_{ab}^f(\tau)$ with general $f\in{\rm F}$. Since $f(u)\leq f_{\rm max}(u)$, $u\geq 0$, it follows from~\eqref{e2} that
\begin{align}\label{c14}
C_{ab}^f(\tau)\leq C_{ab}^{\rm max}(\tau).
\end{align}


\begin{proposition}\label{propCmaxbound}
For all $u_0\geq 1$,
\begin{align}\label{frontt2}
C_{ab}^f(\tau)\geq u_0 f(u_0)C_{a_0b_0}^{\rm max}(\tau),
\end{align}
with $a_0=a/\sqrt{u_0}$ and $b_0=b/\sqrt{u_0}$.
\end{proposition}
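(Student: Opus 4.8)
The plan is to strip the statement down to an elementary pointwise comparison between the kernels $r$ and $r_{\rm max}$. The key observation is that both sides of \eqref{frontt2} carry the \emph{same} factor $A(\tau)$ from \eqref{e2}: since $A(\tau)$ depends only on $\tau$ and the range $[h_{\min},N]$, and not on the kernel $f$ nor on the cutoffs, we have $C_{ab}^f(\tau)=c_{ab}^f(0)A(\tau)$ and $C_{a_0b_0}^{\rm max}(\tau)=c_{a_0b_0}^{\rm max}(0)A(\tau)$ with identical $A(\tau)$. Hence \eqref{frontt2} is equivalent to the kernel-level inequality $c_{ab}^f(0)\ge u_0 f(u_0)\,c_{a_0b_0}^{\rm max}(0)$, and the whole proof reduces to analysing $c_{ab}(0)$.

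Next I would record the $h\downarrow 0$ form of \eqref{mainc}. There $r(ahx)\approx ahx$ (this uses F1 with $f(0)=1$), and the prefactors $(ah)^2$ cancel between numerator and denominator, giving
\[
c_{ab}(0)=\frac{\int_{ah_{\min}}^{b}\int_{ah_{\min}}^{b}(xy)^{1-\tau}r(xy)\,\dd x\,\dd y}{\Big[\int_{ah_{\min}}^{b}x^{1-\tau}\,\dd x\Big]^2}.
\]
The load-bearing step is then a scaling argument applied to $c_{a_0b_0}^{\rm max}(0)$ with $a_0=a/\sqrt{u_0}$, $b_0=b/\sqrt{u_0}$: the substitution $x\mapsto x/\sqrt{u_0}$, $y\mapsto y/\sqrt{u_0}$ maps the window $[a_0h_{\min},b_0]$ back to $[ah_{\min},b]$, and the Jacobian together with the homogeneity of $(xy)^{1-\tau}$ produces a factor $u_0^{\tau-2}$ in both numerator and denominator, which cancels. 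What survives is precisely the representation of $c_{ab}(0)$ above but with $r$ replaced by $r_{\rm max}(xy/u_0)$, where $r_{\rm max}(u)=uf_{\rm max}(u)=\min(u,1)$, and with the \emph{same} denominator $\big[\int_{ah_{\min}}^{b}x^{1-\tau}\dd x\big]^2$. Here the hypothesis $u_0\ge 1$ is what keeps the rescaled cutoffs admissible, since $a_0h_{\min}b_0=ah_{\min}b/u_0\le 1$ preserves the regime \eqref{assonab}.

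With both factors expressed over the common window $[ah_{\min},b]^2$ and sharing the same (positive) denominator, the inequality $c_{ab}^f(0)\ge u_0 f(u_0)c_{a_0b_0}^{\rm max}(0)$ follows by integrating the pointwise estimate
\[
r(t)\ \ge\ u_0 f(u_0)\,r_{\rm max}(t/u_0),\qquad t\ge 0,
\]
against the nonnegative weight $(xy)^{1-\tau}$ with $t=xy$. This pointwise bound is where F1 and F2 enter, and I would prove it by a case split. For $t\le u_0$ one has $r_{\rm max}(t/u_0)=t/u_0$, so the right-hand side is $t f(u_0)$ and the bound reduces to $f(t)\ge f(u_0)$, which holds because $f$ decreases (F1). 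For $t>u_0$ one has $r_{\rm max}(t/u_0)=1$, so the right-hand side is $u_0 f(u_0)=r(u_0)$ and the bound reduces to $r(t)\ge r(u_0)$, which holds because $r(u)=uf(u)$ increases (F2).

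The argument is essentially bookkeeping once the right substitution is found; the only genuinely delicate point is the $\sqrt{u_0}$-rescaling, where one must check that the powers of $u_0$ from the Jacobian and from the homogeneous weight cancel \emph{exactly} between numerator and denominator, so that no slack is introduced and the comparison collapses to $r$ versus $u_0 f(u_0)\,r_{\rm max}(\cdot/u_0)$. After that cancellation F1 and F2 close the estimate with no loss, which is consistent with the bound being tight at the level of the kernels. I expect the only other line requiring care is justifying the limiting form of $c_{ab}(0)$ above, i.e. the cancellation of the $(ah)^2$ factors as $h\downarrow 0$ using $r(u)\approx u$ near the origin.
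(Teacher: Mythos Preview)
Your argument is correct and is essentially the same as the paper's. Both proofs rest on the pointwise inequality $r(t)\ge u_0 f(u_0)\,r_{\rm max}(t/u_0)$, equivalently $f(u)\ge u_0 f(u_0)\min(u_0^{-1},u^{-1})$, obtained from the monotonicity of $f$ (F1) for $t\le u_0$ and of $r=uf$ (F2) for $t>u_0$, combined with the $\sqrt{u_0}$-rescaling that produces the cancelling $u_0^{\tau-2}$ factors in numerator and denominator; you merely reverse the order (rescale $c_{a_0b_0}^{\rm max}(0)$ first, then compare integrands) whereas the paper bounds $f$ first and then substitutes.
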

In particular, the choice $u_0=1$ yields
\begin{align}\label{c20}
C_{ab}^f(\tau)&\geq f(1)C_{ab}^{\rm max}(\tau),
\end{align}
which together with \eqref{mainapprox} and \eqref{c14} gives the large-network behavior of $C_{ab}^f(\tau)$, when $\tau\in(2,3)$ and away from 2 and 3, and $a$ is small and $b$ is large (up to a multiplicative constant that is less interesting).
In particular, this shows that $C_{ab}^f\sim N^{2-\tau}\ln(N)$ (again ignoring the constant).


\section{Persistent clustering}\label{sec:pers}
In \cite{pol2012} it was observed that for values of the exponent $\tau\approx 2$, clustering remains nearly constant up to extremely large network sizes, which makes the convergence to the hydrodynamic limit extremely slow (as also observed in \cite{boguna2009langevin,janssen2016giant}). Here we now use the explicit results for the maximally dense graph to characterize this rate of convergence as a function of the network size $N$. For convenience, we assume in this section that $h_{\min}=1$.

In view of the lower and upper bounds obtained in Section~\ref{sec:max} for $C_{ab}^f(\tau)$  with general $f\in{\rm F}$ in terms of $C_{ab}^{\rm max}(\tau)=C_{ab}(\tau)$, it suffices to consider $C_{ab}^{\rm max}(\tau)$ for $\tau$ close to $2$.
In Appendix \ref{sec:leading} we show that when $\tau$ is close to 2 and $\abs{\ln(ab)/\ln(b^2)}$ is small, $C_{ab}^{\max}(\tau)$ can be approximated by
\begin{align}\label{f9}
C_{ab}^{\rm max}(\tau)\approx \frac{ A(\tau)(1-\frac13 (\tau-2)\ln (b^2))}{2(1-\frac 12(\tau-2)\ln(ab)+\frac16 (\tau-2)^2\ln^2 b)^2}.
\end{align}
It is the term $-\tfrac13 (\tau-2)\ln (b^2)$ in the numerator and the term $\tfrac16 (\tau-2)^2\ln^2 b$ in the denominator (the term $\tfrac12 (\tau-2)\ln(ab)$ being less important) of the right-hand side of \eqref{f9} that are the main influencers for when $C_{ab}^{\rm max}(\tau)$ starts to decay. The decay is certainly absent as long as the numerator $1-\tfrac13(\tau-2)\ln (b^2)$ is away from zero.

We then apply this reasoning to the canonical choices $h_s=\sqrt{N\mean{h}}$ and $h_c=(N\mean{h})^{1/(\tau-1)}$, for which
\begin{align}\label{f12}
b=(N\mean{h})^{\frac{3-\tau}{2(\tau-1)}},\quad ab=(N\mean{h})^{-\frac{\tau-2}{\tau-1}},
\end{align}
ensuring $\abs{\ln(ab)/\ln(b^2)}=(\tau-2)/(\tau-3)$ to be small indeed.  
 Then, choosing a threshold $t\in(0,3)$ and solving $N$ from
 \begin{equation}
 (\tau-2)\ln(b^2)=t,
 \end{equation}
 we get
 \begin{equation}\label{a2}
 N\mean{h}=\exp\left(\frac{\tau-1}{(\tau-2)(3-\tau)}t\right).
 \end{equation}
In Table~\ref{tab1} we consider the case that $t=2$ and use that $\mean{h}$ can accurately be bounded above by $\ln N$ when $h_{\min}=1$ and $\tau$ is close to 2, and we let $N_{\tau,2}$ be such that $N\ln N$ equals the right-hand side of~\eqref{a2}. For $\tau=2.1$, the value of $N$ where the clustering starts to decay is much larger than the typical size of real-world network data sets. This supports the observation that clustering is persistent for $\tau$ close to 2. 


\begin{table}[t]
\centering
\begin{tabular}{r|l}
\hline
$\tau$ & $N_{\tau,2}$ \\ \hline
   $2.3$    &  $2.37\cdot 10^4$ \\
   $2.2$    & $2.62 \cdot 10^5$ \\
   $2.1$    & $1.93\cdot 10^{9}$ \\
   $2.05$    & $3.92\cdot 10^{17}$ \\
\hline
\end{tabular}%
\caption{Solution $N_{\tau,t}$ to $(\tau-2)\ln (b^2)=2$.}
\label{tab1}
\end{table}

\section{Proof of Proposition \ref{thm1}}\label{sec:proof}
We consider
\begin{equation}
c_{ab}(0)=D_{ab}(\tau)=\frac{\int_{a}^{b}\int_{a}^{b}f(xy)(xy)^{2-\tau}\dd x\dd y}{\left(\int_{a}^{b}x^{1-\tau}\dd x\right)^2},
\end{equation}
where we have written the lower integration limit $ah_{\min}$ in~\eqref{frontt} as $a$ for notational convenience. We fix $N$, and study the dependence of $D_{ab}(\tau)$ on $\tau$. We assume here that $a$ and $b$ are fixed, and do not depend on $\tau$. Assume that $f$ is positive and satisfies F2 and F3. We have for $D'_{ab}(\tau)=\frac{\dd}{\dd \tau}D_{ab}(\tau)$,
\begin{align}\label{e3}
&D_{ab}'(\tau)=\frac{-\int_{a}^{b}\int_{a}^{b} f(xy)\ln(xy)(xy)^{2-\tau}\dd x\dd y}{\Big(\int_{a}^{b}x^{1-\tau}\dd x \Big)^2}+\frac{2\int_{a}^{b}\int_{a}^{b} f(xy)(xy)^{2-\tau}\dd x\dd y \int_{a}^{b} x^{1-\tau}\ \ln x\dd x}{\Big(\int_{a}^{b}x^{1-\tau}\dd x \Big)^3}
\end{align}
Observe that $D_{ab}'(\tau)\leq 0$ if and only if
\begin{align}\label{e4}
\frac{\int_{a}^{b}\int_{a}^{b} f(xy)\ln(xy)(xy)^{2-\tau}\dd x\dd y}{\int_{a}^{b}\int_{a}^{b} f(xy)(xy)^{2-\tau}\dd x\dd y}
\geq 2 \frac{\int_{a}^{b} x^{1-\tau}\ln x \ \dd x}{\int_{a}^{b} x^{1-\tau}\dd x}.
\end{align}
Symmetry of $f(xy)/(xy)^{\tau-2}$ and $\ln(xy)=\ln x+\ln y$ gives
\begin{align}\label{e5}
&\int_{a}^{b}\int_{a}^{b} f(xy)\ln(xy)(xy)^{2-\tau}\dd x\dd y =2 \int_{a}^{b}\ln x \Big(\int_{a}^{b} f(xy)(xy)^{2-\tau}\dd y \Big)\dd x.
\end{align}
Letting
\begin{alignat}{3}\label{e6}
&W(x)&&=\frac{\int_{a}^{b}f(xy)(xy)^{2-\tau}\dd y}
{\int_{a}^{b}\int_{a}^{b} f(vy)(vy)^{2-\tau}\dd v\dd y},  \ && a\leq x \leq b,\\
\label{e7}
&V(x)&&=\frac{x^{1-\tau}}
{\int_{a}^{b}v^{1-\tau}\dd v},  \ && a\leq x \leq b,
\end{alignat}
we thus need to show that
\begin{align}\label{e8}
\int_{a}^{b}\ln x \  W(x)\dd x\geq \int_{a}^{b}\ln x  \ V(x)\dd x.
\end{align}
Observe that
\begin{align}\label{e9}
x^{\tau-1}\int_{a}^{b}f(xy)(xy)^{2-\tau}\dd y =
\int_{a}^{b}\left(xy f(xy)\right)y^{1-\tau}\dd y,
\end{align}
which increases in $x>0$ when $f$ satisfies F2. Therefore, $W(x)/V(x)$ increases in $x>0$ when $f$ satisfies F2. Furthermore, $\ln x$ increases in $x>0$, so the inequality in \eqref{e8} follows from the following lemma:

\begin{lemma}\label{lemma1}
Let $0<a<b$ and assume that $p(x)$ and $q(x)$ are two positive, continuous probability distribution functions {\rm(}pdf's{\rm )} on $[a,b]$ such that $p(x)/q(x)$ is increasing in $x\in[a,b]$. Let $g(x)$ be an increasing function of $x\in[a,b]$. Then
\begin{align}\label{e10}
g_p=\int_a^b g(x)p(x)\dd x \geq \int_a^b g(x)q(x)\dd x = g_q.
\end{align}
\end{lemma}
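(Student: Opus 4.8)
The plan is to prove this as a standard correlation-type inequality, the kind that underlies Chebyshev's sum inequality in its integral form. The essential structure is: we have two probability densities $p$ and $q$ on $[a,b]$ with the likelihood-ratio ordering $p/q$ increasing, and an increasing test function $g$; we want $\int g\,p \geq \int g\,q$. First I would reduce to showing $\int_a^b g(x)\bigl(p(x)-q(x)\bigr)\dd x \geq 0$, and the natural approach is to exploit the single-crossing property that the monotone likelihood ratio forces on $p-q$.

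The key steps would run as follows. Since $p/q$ is increasing and both densities integrate to $1$, the function $p-q = q\cdot(p/q - 1)$ must change sign exactly once, from negative to positive: there is a threshold $x_0\in[a,b]$ with $p(x)\leq q(x)$ for $x<x_0$ and $p(x)\geq q(x)$ for $x>x_0$. (This $x_0$ is the point where $p/q$ crosses the value $1$; if $p/q$ never reaches $1$ the densities cannot both be probability densities unless they are equal, so a genuine crossing exists.) Then I would write
\begin{align}\label{e-decomp}
\int_a^b g(x)\bigl(p(x)-q(x)\bigr)\dd x
= \int_a^b \bigl(g(x)-g(x_0)\bigr)\bigl(p(x)-q(x)\bigr)\dd x,
\end{align}
where the replacement of $g(x)$ by $g(x)-g(x_0)$ is legitimate because $g(x_0)\int_a^b(p-q)\dd x = 0$. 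Now the integrand is a product of two factors that always share the same sign: for $x<x_0$ we have $g(x)-g(x_0)\leq 0$ (since $g$ is increasing) and $p(x)-q(x)\leq 0$, while for $x>x_0$ both factors are nonnegative. Hence the integrand is nonnegative pointwise, the integral is nonnegative, and \eqref{e10} follows.

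The main obstacle, such as it is, lies in cleanly establishing the single-crossing claim and making the degenerate cases harmless. One must argue that $p/q - 1$, being monotone, cannot oscillate, and handle the boundary possibilities where the crossing sits at an endpoint (in which case $p-q$ has a constant sign and the conclusion is immediate, or $p\equiv q$ and both sides are equal). A subtlety worth noting is that monotonicity of $p/q$ does not require $p-q$ itself to be monotone, only that it changes sign at most once; the proof only uses that single sign change together with the constraint that both functions integrate to the same value. Everything else is elementary, so I expect the write-up to be short, with the only genuine care needed in phrasing the crossing argument so it covers the edge cases without assuming strict monotonicity or strict inequality anywhere.
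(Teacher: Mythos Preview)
Your argument is correct and is essentially the same single-crossing device as the paper's, but with the roles of the two factors swapped. You locate the crossing point $x_0$ of $p-q$ (where $p/q=1$) and then shift $g$ by the constant $g(x_0)$ so that $g-g(x_0)$ changes sign at the same place. The paper instead locates a point $x_q$ where the increasing function $g$ crosses its $q$-mean $g_q$, and then exploits the identity
\[
g_p-g_q=\int_a^b\bigl(g(x)-g_q\bigr)\bigl(p(x)-Rq(x)\bigr)\dd x,
\]
valid for \emph{every} $R\in\mathbb{R}$ (a one-line check using $\int p=\int q=1$ and $\int gq=g_q$), and sets $R=p(x_q)/q(x_q)$ so that $p-Rq$ changes sign at $x_q$ as well. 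Either way the integrand becomes pointwise nonnegative.

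The paper's version has two small practical advantages. First, the existence of $x_q$ is automatic: since $g$ is increasing and $g_q$ is its average against a probability measure, there is always a threshold in $[a,b]$ with $g\le g_q$ to the left and $g\ge g_q$ to the right, with no edge-case discussion needed. Your version requires the short argument (which you outline) that $p/q$ being increasing and both densities integrating to $1$ forces $p-q$ to cross zero. Second, the free parameter $R$ in the paper's identity is reused later (in the appendix handling a discontinuous $z$) where one needs $R=t(v_0)/u(v_0)$ rather than $R=1$; your formulation would need to be re-derived for that setting. Neither point is a gap in your proof of the lemma as stated.
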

\begin{proof}
For any $R\in \mathbb{R}$,
\begin{align}\label{e11}
g_p-g_q=\int_a^b (g(x)-g_q)(p(x)-R q(x))\dd x,
\end{align}
since $p$ and $q$ are pdf's. Let $x_q$ be a point in $[a,b]$ such that $g(x)\leq g_q$ when $x\leq x_q$ and $g(x)\geq g_q$ when $x\geq x_q$. Choose $R=p(x_q)/q(x_q)$, so that by monotonicity of $g$ and $p/q$, there are the logical representations
\begin{align}
a\leq x\leq x_q &\Rightarrow \left(g(x)-g(x_q)\leq 0 \wedge p(x)-R q(x)\leq 0\right),\nonumber\\
\label{e13}
x_q\leq x\leq b &\Rightarrow \left(g(x)-g(x_q)\geq 0 \wedge p(x)-R q(x)\geq 0\right).\nonumber
\end{align}
Hence, the integrand in \eqref{e11} is everywhere nonnegative, so that $g_p-g_q\geq 0$ as required.
\end{proof}

\begin{remark} The following observation will prove useful later:
{\rm(i)} The inequality in \eqref{e10} is strict when both $g(x)$ and $p(x)/q(x)$ are strictly increasing. {\rm(ii)} When $g(x)$ is (strictly) decreasing and $p(x)/q(x)$ is (strictly) increasing, there is $\leq$ ($<$) rather than $\geq$ ($>$) in \eqref{e10}.
\end{remark}

Now that we have shown F2 to be a sufficient condition for $D_{ab}(\tau)$ to be increasing, we next show that F2 is also a necessary condition. Suppose we have two points $u_1$ and $u_2$ with $0<u_1<u_2$ such that $u_1f(u_1)>u_2f(u_2)$. Since $f$ is continuous and piecewise continuous differentiable, there is a $u_0\in(u_1,u_2)$ and $\varepsilon>0$ such that $uf(u)$ is strictly decreasing in $u\in[u_0-\varepsilon,u_0+\varepsilon]$. In \eqref{e9}, take $a=\sqrt{u_0-\varepsilon}$ and  $b=\sqrt{u_0+\varepsilon}$ so that $xy\in [u_0-\varepsilon,u_0+\varepsilon]$ when $xy\in[a,b]$. Therefore, $W(x)/V(x)$ is strictly decreasing in $x\in[a,b]$. By the version of Lemma \ref{lemma1} with $g(x)=\ln x$ strictly increasing and $p(x)/q(x)=W(x)/V(x)$ strictly decreasing, we see that $g_p-g_q<0$. Therefore, we have \eqref{e8} with $<$ instead of $\geq$, and so $D_{ab}'(\tau)$ is positive for all $\tau$ with this particular choice of $a$ and $b$. This completes the proof of Proposition \ref{thm1}(i).$\hfill\qedsymbol$

\section{Proof of Proposition~\ref{thm2}}
We consider for a fixed $a,b,\tau$ and $\w>0$,
\begin{equation}\label{e14}
c_{ab}(\w)=\frac{\int_{a}^{b}\int_{a}^{b} (xy)^{2-\tau}
f(a \w x)f(a\w y)f(xy)\dd x\dd y}{\Big(\int_{a}^{b}x^{1-\tau}f(a \w x)\dd x\Big)^2}.
\end{equation}
Observe that $\frac{\dd}{\dd h}c_{ab}(h)\leq 0$ if and only if
\begin{align}\label{e15}
&\frac{\frac{\dd}{\dd h}\Big[\int_{a}^{b}\int_{a}^{b} (xy)^{2-\tau}
f(a \w x)f(a\w y)f(xy)\dd x\dd y\Big]}{\int_{a}^{b}\int_{a}^{b} (xy)^{2-\tau}
f(a \w x)f(a\w y)f(xy)\dd x\dd y}\quad \leq {2} \frac{\frac{\dd}{\dd h}\Big[\int_{a}^b f(a h x)x^{1-\tau}\dd x\Big]}{\int_{a}^b f(a h x)x^{1-\tau}\dd x}.
\end{align}
Using
\begin{align}\label{e16}
\frac{\dd}{\dd h}[f(a h x)f(a h y)]&=axf'(ahx)f(ahy)+ay f(ahx)f'(ahy)
\end{align}
and the symmetry of the function $f(xy)/(xy)^{\tau-2}$ gives
\begin{align}\label{e17}
&\frac{\dd}{\dd h}\Big[\int_{a}^{b}\int_{a}^{b}
f(a \w x)f(a\w y)f(xy)(xy)^{2-\tau}\dd x\dd y\Big]\nonumber\\
&\quad =2 \int_{a}^{b}\int_{a}^{b}
a x f'(a \w x)f(a\w y)f(xy)(xy)^{2-\tau}\dd x\dd y.
\end{align}
Also,
\begin{align}\label{e18}
\frac{\dd}{\dd h}\Big[\int_{a}^{b}f(ahx)x^{1-\tau}\dd x ]=\int_{a}^{b}
a x f'(a hx)x^{1-\tau}\dd x.
\end{align}
So we write the left-hand side of \eqref{e15} as
\begin{align}\label{e19}
2\int_{a}^{b}\frac{ax f'(ahx)}{f(ahx)}T(x)\dd x,
\end{align}
and the right-hand side of
\eqref{e15} as
\begin{align}\label{e20}
2\int_{a}^{b}\frac{ax f'(ahx)}{f(ahx)}U(x)\dd x,
\end{align}
where the pdf's $T(x)$ and $U(x)$ on $[a,b]$ are defined as
\begin{align}\label{e21}
T(x)=
\frac{ f(a \w x)\int_a^b f(a\w y)f(xy)(xy)^{2-\tau}\dd y}
{\int_{a}^{b}\int_{a}^{b}
f(a \w v)f(a\w y)f(vy)(vy)^{2-\tau}\dd v\dd y}
\end{align}
and
\begin{align}\label{e22}
U(x)=
\frac{ f(a \w x) x^{1-\tau}}
{\int_{a}^{b}f(a \w v)v^{1-\tau}\dd v}.
\end{align}
The inequality in \eqref{e15} thus becomes
\begin{align}\label{e23}
\int_{a}^{b}\frac{-a h x f'(ahx)}{f(ahx)}T(x)\dd x
\geq \int_{a}^{b}\frac{-a h x f'(ahx)}{f(ahx)}U(x)\dd x,
\end{align}
where we have multiplied by $h>0$. Assume that $f$ satisfies F2. Then
\begin{align}\label{e24}
&x^{\tau-1}\int_{a}^{b}f(ahy)f(xy)(xy)^{2-\tau}\dd y =
\int_{a}^{b}xyf(ahy)f(xy)y^{1-\tau}\dd y
\end{align}
is increasing in $x>0$. Therefore, see \eqref{e21} and \eqref{e22}, $T(x)/U(x)$ is increasing in $x>0$. Hence, from Lemma \ref{lemma1} we get \eqref{e23} when $g(x)=-ahx f'(ahx)/f(ahx)$ is increasing in $x>0$, i.e.~when $f$ satisfies F4.

We have now shown that when $f$ satisfies F1-F3, the condition F4 is sufficient for $c_{ab}(h)$ to be decreasing in $h>0$.
For the result in the converse direction we argue as follows. The function $uf(u)$ is continuous, piecewise smooth, increasing and not constant, and so there is a $u_0>0$, $\varepsilon>0$ such that $uf(u)$ is strictly increasing in $u\in[u_0-\varepsilon,u_0+\varepsilon]$.
Let $z(v)=-vf'(v)/f(v)$, and assume there are $0<v_1<v_2$ such that
$z(v_1)>z(v_2)$. We may assume that $z$ is continuous at $v=v_1,v_2$. Indeed, when $z$ is discontinuous at $v_1$ say, $z(v_1)=\frac12 (z(v_1+0)+z(v_2-0))$ and so at least one of $z(v_1-0)=\lim_{v\uparrow v_1}z(v)$ and
$z(v_1+0)=\lim_{v\downarrow v_1}z(v)$ is larger than $z(v_2)$. Since $z$ has only finitely many discontinuities, it suffices to decrease or increase $v_1$ somewhat, to a point of continuity, while maintaining $z(v_1)>z(v_2)$.
We have to consider two cases.

A. Assume that $z(v)$ is continuous on $[v_1,v_2]$.
We can then basically argue as in the proof of the only-if part of Proposition \ref{thm1}. Thus, there is a $v_0>0$, $\delta>0$ such that
$z(v)$ is strictly decreasing in $v\in[v_0-\delta,v_0+\delta]$.
We choose $a,b$ such that $xy\in[u_0-\varepsilon,u_0+\varepsilon]$ when $x,y\in[a,b]$.
This is satisfied when $(u_0-\varepsilon)^{1/2}\leq a< b\leq (u_0+\varepsilon)^{1/2}$, and it
guarantees that $T(x)/U(x)$ is strictly increasing in $x\in[a,b]$. Next, we choose $h$
such that $ahx\in[v_0-\delta,v_0+\delta]$ when $x\in[a,b]$, so that $z(ahx)$ is strictly decreasing in $x\in[a,b]$.
For this, we need to take $h$ such that $a^2h\geq v_0-\delta$ and $abh\leq v_0+\delta$.
 This can be done indeed when $a/b\geq(v_0-\delta)/(v_0+\delta$.
Choosing $a$ and $b$ with $a<b$, $a,b\in[(u_0-\varepsilon)^{1/2},(u_0+\varepsilon)^{1/2}]$
such that this latter condition is satisfied, we can apply
 the version of Lemma~\ref{lemma1} with strictly decreasing  $g(x)=z(ahx)$
and  strictly increasing $p(x)/q(x)=T(x)/U(x)$. Thus we get in \eqref{e23} strict inequality $<$ for these $a,b$ and $h$, and this means that $c_{ab}'(h)<0$. This proves Proposition \ref{thm2} for this case.

B. Assume that $z(v)$ has discontinuities on
$[v_1,v_2]$, say at $c_1<c_2<\cdots<c_j$ with $v_1<c_1$ and $v_2>c_j$.
In the case that there is an interval $[v_0-\delta,v_0+\delta]$
contained in one of $(v_1,c_1),(c_1,c_2),\ldots,(c_j,v_2)$ where $z$ is strictly decreasing, we are in the position of case A, and then we are done.
%
%
%
Otherwise, we have by F3 that $z'(v)\geq 0$ for all $v\in[v_1,v_2]$, $v\neq c_1,\ldots,c_j$. Then we must have
$z(v_0-0)>z(v_0+0)$ for at least one $v_0=c_1,\ldots,c_j$, for else we would have
$z(v_1)\leq z(c_1-0)\leq z(c_1+0)\leq \cdots \leq z(c_j-0)\leq z(c_j+0)\leq z(v_2)$. For this remaining case, we refer to Appendix \ref{newapp} where we adapt the reasoning in the proof of
Lemma~\ref{lemma1}.


\section{Outlook}

For hidden-variable models with scale-free degree distributions and connection probabilities in the F-class, we have shown that the local clustering coefficient $c(h)$ decays with the hidden variable $h$ and that the average clustering coefficient $C(\tau)$ roughly decreases with the tail exponent $\tau$ according to some function that depends on the structural and natural cutoffs. For the typical cutoff choices $\sqrt{N}$ and $N^{1/(\tau-1)}$ this showed that $C$ decays as $N^{2-\tau}\ln N$, confirming an earlier result in \cite{pol2012} and suggesting universal behavior for the entire F-class introduced in this paper. By analyzing the special case of maximally dense graphs, a member of the F-class, we estimated the constant $C(\tau)/N^{2-\tau}\ln N$ and the extremely slow decay that occurs when $\tau\downarrow 2$.

The hidden-variable model is a widely adopted null model, not only because of its ability to generate inhomogeneous random graphs, but also because of recent work that uses the hidden-variable model for constructing geometric versions of random graphs~\cite{krioukov2016clustering}. Here, the common thread is to equip every vertex not only with a weight, but also with a randomly chosen position in some space. Two vertices then form an edge independently with a probability that is proportional to the product of their weights and inversely proportional to some power of their Euclidian distance, that gives rise to a class of connection probabilities that generalizes the F-class with a geometric feature. It would be interesting to use the methods developed in this paper to investigate the clustering in relation to cutoffs and tail exponent in these graphs with an underlying geometry.

Another possible thread is to compare clustering in the hidden-variable model with clustering in other null models, like the configuration model (CM). For any given real-world network, the CM preserves the degree distribution $P(k)$, and makes connections between vertices in the most random way possible \cite{Clauset2009,Newm10a,Pastor2002,email2002,Dhara16}. Given the random nature of the edge assignment, the CM has in principle no degree correlations. But in case of scale-free networks with diverging second moment, this random assignment leads to uncorrelated networks with non-negligible fractions of self-loops (a vertex joined to itself) and multiple connections (two vertices connected by more than one edge). This could be avoided by forbidding self-loops and multiple edges, for instance by generating a sample from the CM and then erasing all the self-loops and multiple edges. This comes at the cost, however, of introducing non-trivial degree correlations among vertices.

In future work we want to investigate the clustering in such erased configuration models. Hidden-variable models are {\it soft} models, where unlike {\it hard} models such as the (erased) CM, graph constraints are satisfied only on average. Soft models are probabilistically more tractable because of the weak dependence structures, which makes it is easier to show model properties. Transferring results for soft models to hard models is not straightforward, and the question is whether the important clustering properties are asymptotically invariant to the soft and hard constraints in the large-network limit. If this is the case, then that would extend the universality class from hidden-variable models in the F-class to a larger class of random network models with single-edge constraints.

\section*{Acknowledgements}
This work is supported by NWO TOP grant 613.001.451 and by the NWO
Gravitation Networks grant 024.002.003.  The work of RvdH is further supported by the NWO
VICI grant 639.033.806.  The work of JvL is further supported by an NWO TOP-GO grant and
by an ERC Starting Grant.

\bibliographystyle{unsrt}
\bibliography{bib}

\appendix

\section{Proof of Proposition~\ref{propCmax}}
Taking the limit $h\downarrow 0$ in~\eqref{mainc}, with $r(u)=u\min(1,1/u)$, we have
\begin{equation}\label{eq:c2}
c_{ab}^{\max}(0)=\frac{\int_{a}^{b}\int_{a}^{b}(xy)^{2-\tau}\min(1,(xy)^{-1})\dd x\dd y }{\left[\int_{a}^{b}x^{1-\tau}\dd x\right]^2},
\end{equation}
where we have written $a$ instead of $ah_{\min}$ for ease of notation.
The denominator in \eqref{eq:c2} is evaluated as
\begin{align}\label{c3}
\Big(\int_a^b x^{1-\tau}\dd x\Big)^2=\frac{1}{(\tau-2)^2}\left(a^{2-\tau}-b^{2-\tau}\right)^2.
\end{align}
For the numerator in \eqref{eq:c2} we compute
\begin{align}\label{c4}
\int_a^b \int_a^b &\min (1, (xy)^{-1})(xy)^{2-\tau}\dd x \dd y \nonumber\\
& = \int_a^{1/b} \int_a^b (xy)^{2-\tau}\dd y \dd x +  \int_{1/b}^b \left(\int_a^{1/x}(xy)^{2-\tau}\dd y +\int_{1/x}^{b}(xy)^{1-\tau}\dd y\right)\dd x \nonumber\\
& = \int_a^{1/b}x^{2-\tau}\dd x \int_a^b y^{2-\tau}\dd y +  \int_{1/b}^bx^{2-\tau} \int_a^{1/x}y^{2-\tau}\dd y\dd x +\int_{1/b}^{b}x^{1-\tau}\int_{1/x}^{b}y^{1-\tau}\dd y\dd x \nonumber\\
& =\frac{(b^{\tau-3}-a^{3-\tau})(b^{3-\tau}-a^{3-\tau})}{(3-\tau)^2} +\frac{1}{3-\tau}\int_{1/b}^bx^{2-\tau} (x^{\tau-3}-a^{3-\tau})\dd x \nonumber\\
&\quad +\frac{1}{2-\tau}\int_{1/b}^{b}x^{1-\tau}(b^{2-\tau}-x^{\tau-2})\dd x \nonumber\\
& =\frac{(b^{\tau-3}-a^{3-\tau})(b^{3-\tau}-a^{3-\tau})}{(3-\tau)^2} +\frac{1}{3-\tau}\left(\ln(b^2)-\frac{a^{3-\tau}(b^{3-\tau}-b^{\tau-3})}{3-\tau}\right)\nonumber\\
&\quad +\frac{1}{2-\tau}\left(\frac{b^{2-\tau}(b^{2-\tau}-b^{\tau-2})}{2-\tau}-\ln(b^2)\right) \nonumber\\
&=\frac{\ln(b^2)}{(3-\tau)(\tau-2)}-\frac{1-b^{2(2-\tau)}}{(2-\tau)^2}+\frac{1-2(ab)^{3-\tau}+a^{2(3-\tau)}}{(3-\tau)^2}.
\end{align}
The last member of~\eqref{c4} equals $I_{ab}^{\max}(\tau)$ in~\eqref{c12}, and the result follows from~\eqref{e2},~\eqref{eq:c2},~\eqref{c3} and~\eqref{c4}.  $\hfill\qedsymbol$

\section{Proof of Proposition~\ref{propCmaxbound}}
Take $u_0\geq 1$ and note that
\begin{align}\label{c15}
f(u)\geq u_0 f(u_0) \min (u_0^{-1},u^{-1}), \ u\geq 0,
\end{align}
since, for $f\in{\rm F}$,
\begin{align}\label{c16}
f(u)\geq f(u_0), \ 0\leq u\leq u_0; \ uf(u)\geq u_0 f(u_0), \ u\geq u_0.
\end{align}
Now for any $c>0$,
\begin{align}\label{c17}
\int_a^b \int_a^b \min (c,(xy)^{-1})(xy)^{2-\tau}\dd x\dd y &=c^{\tau-2}\int_{a\sqrt{c}}^{b\sqrt{c}} \int_{a\sqrt{c}}^{b\sqrt{c}} \min (1,(xy)^{-1})(xy)^{2-\tau}\dd x\dd y \nonumber\\
&=c^{\tau-2}I_{{\rm max};a\sqrt{c},b\sqrt{c}}(\tau).
\end{align}
Also,
\begin{align}\label{c18}
\frac{(\tau-2)^2}{\left(a^{2-\tau}-b^{2-\tau}\right)^2}
=c^{\tau-2}\frac{(\tau-2)^2}{\left((a\sqrt{c})^{2-\tau}-(b\sqrt{c})^{2-\tau}\right)^2}.
\end{align}
The result then follows from combining \eqref{c15}, \eqref{c17} and \eqref{c18}. $\hfill\qedsymbol$

\section{Derivation of Equation (\ref{f9})}\label{sec:leading}
We shall derive~\eqref{f9} assuming~\eqref{assonab} and that $\abs{\ln(ab)/\ln(b^2)}$ is small. In the present case, where $a$ and $b$ are given through~\eqref{def2},~\eqref{def3} and~\eqref{eq:change} with $h_{\min}=1$, this indeed holds since $\abs{\ln(ab)/\ln(b^2)}=(\tau-2)/(3-\tau)$.  
With $s=\tau-2$ we consider
\begin{align}\label{f1}
&C_{ab}^{\rm max}(\tau)
=\frac{s^2}{\left(a^{-s}-b^{-s}\right)^2}\Big[
\frac{\ln (b^2)}{s(1-s)}
-\frac{1-b^{-2s}}{s^2}+\frac{1-2(ab)^{1-s}+a^{2(1-s)}}{(1-s)^2}\Big],
\end{align}
where we have written $a$ instead of $ah_{\min}$ for notational convenience.
We develop, assuming $s\ln (b^2)$ of order unity and less,
\begin{align}\label{f4}
&\frac{\ln (b^2)}{s(1-s)}-\frac{1-b^{-2s}}{s^2}+\frac{1-2(ab)^{1-s}+a^{2(1-s)}}{(1-s)^2}=\tfrac{1}{2}\ln^2 b^2\Big(1-\tfrac13 s \ln (b^2)+\ldots + O((\ln^2 b^2)^{-1})\Big).
\end{align}
Also,
\begin{align}\label{f5}
&\frac{a^{-s}-b^{-s}}{s}=\ln(b/a) \Big[1-\tfrac{1}{2}s\ln(ab)+\tfrac16 s^2 \left(\ln^2 b +\ln b \ln a+\ln^2 a\right)-\ldots \Big].
\end{align}
Note that
\begin{align}\label{f6}
\ln(b/a)=\ln (b^2)-\ln(ab)=\ln (b^2)\Big(1-\frac{\ln(ab)}{\ln (b^2)}\Big),
\end{align}
\begin{align}\label{f7}
\ln^2 b+\ln b \ln a +\ln^2 a=\ln^2 b\Big(1-\frac{\ln(ab)}{\ln b}+\Big(\frac{\ln(ab)}{\ln b}\Big)^2\Big).
\end{align}
Thus we get
\begin{align}\label{f8}
\frac{a^{-s}-b^{-s}}{s}&=\ln (b^2) \Big(1-\frac{\ln(ab)}{\ln (b^2)}\Big)
\Big(1-\tfrac{1}{2}s\ln(ab)+\tfrac16 s^2 \ln^2 b\Big(1+O\Big(\frac{\ln(ab)}{\ln b}\Big)\Big)\Big) \nonumber\\
&\approx \ln (b^2)\left[1-\frac 12 s \ln(ab)+\frac 16 s^2\ln^2 b\right],
\end{align}
where we have used the assumption that  $|\ln(ab)/\ln b|$ is small.

When we insert \eqref{f4} and \eqref{f8} into \eqref{f1} and divide through $\ln^2 (b^2)$, we arrive at \eqref{f9}.

\section{Maximally random graph}\label{sec:pol}
Define Lerch's transcendent
\begin{equation}
\Phi(z,s,v)=\sum_{k=0}^\infty \frac{z^k}{(k+v)^s}.
\end{equation}
In \cite{pol2012} it was shown that for the maximally random graph
\eqref{ex3}
\begin{align}
C_{ab}(\tau)&=   \frac{(\tau-2)^2}{\left(a^{2-\tau}-b^{2-\tau}\right)^2}\Big\{\frac{\pi\ln (b^2)}{\sin \pi(\tau-2)}-\frac{\pi^2\cos\pi(\tau-2) }{\left(\sin \pi(\tau-2)\right)^2}+b^{-2(\tau-2)}\Phi (-b^{-2},2,\tau-2)\nonumber\\ 
&\quad +a^{2(3-\tau)}\Phi \left(-a^{2},2,3-\tau\right)-2(ab)^{3-\tau}\Phi \left(-ab,2,3-\tau\right) \Big\}.\label{newpol}
\end{align}
(The expression is slightly simplified compared to \cite[Eq.~(5)]{pol2012}.) Comparing \eqref{newpol} and \eqref{frontt} shows that the front factor is identical, and that the terms in between brackets differ. Table \ref{tab2} compares the two dominant terms in \eqref{newpol} and \eqref{frontt} and shows that these terms are of comparable magnitude for $\tau-2$ small.
\begin{table}
	\centering
\begin{tabular}{r|rrrr}
$s$ &  $\frac{\pi}{\sin \pi s}$ & $\frac{1}{s(1-s)}$ & $\frac{\pi^2\cos\pi s}{\left(\sin \pi s\right)^2}$ & $\frac{1}{s^2}-\frac{1}{(1-s)^2}$  \\
\hline
0.1 & 10.1664 & 11.1111 & 98.2972 & 98.7654 \\
0.2 &  5.3448 & 6.2500  & 23.1111  & 23.4375\\
0.3 &  3.8832 & 4.7619 & 8.8635 & 9.0703 \\
0.4 & 3.3033 & 4.1666 & 3.3719 & 3.4722 \\
0.5 & 3.1416 & 4.0000 & 0.0000 & 0.0000 \\
\end{tabular}%
\caption{Dominant terms in \eqref{newpol} and \eqref{frontt} for several values of $s=\tau-2$.}
\label{tab2}
\end{table}

\section{Completion proof only-if part Proposition~\ref{thm2}}\label{newapp}
We want to find $a,b$ such that
\begin{equation}\label{eq:Tx}
\int_{a}^{b}z(ahx)T(x)\dd x<\int_{a}^{b}z(ahx)U(x)\dd x
\end{equation}
for the case that $z(v)$ has a downward jump at $v=v_0>0$ while being increasing to the left and to the right of $v_0$. Set
\begin{equation}
\Delta = z(v_0-0)-z(v_0+0),\quad M=\frac12\left(z(v_0-0)+z(v_0+0)\right),
\end{equation}
and observe that $M\geq \frac12\Delta>0$ since $z(v)\geq 0$ for all $v$. We can find $\delta>0$ such that
\begin{align}
	z(v_0-0)&\geq z(v)\geq z(v_0-0)-\frac 18 \Delta,\quad v_0-\delta\leq v<v_0\label{eq:k-}\\
	z(v_0+0)&\leq z(v)\leq z(v_0+0)+\frac18\Delta, \quad v_0<v\leq v_0+\delta. \label{eq:k+}
\end{align}
Next, let
\begin{equation}\label{eq:lv}
l(v)=f(v)v^{1-\tau},\quad v>0,
\end{equation}
and observe that $l(v)$ is positive and continuous at $v=v_0$. Hence, we can choose $\delta>0$ such that, in addition to~\eqref{eq:k-} and~\eqref{eq:k+},
\begin{equation}\label{eq:abs}
\abs{\frac{l(v)}{l(v_0)}-1}\leq \lambda,\quad v\in[v_0-\delta,v_0+\delta],
\end{equation}
where $\lambda$ is any number between 0 and $\tfrac{5}{16}\Delta/(2M+\tfrac{7}{16}\Delta)$. As in case A of the proof, we choose $a,b$ and $h$ such that
\begin{equation}
xy \in [u_0-\varepsilon,u_0+\varepsilon],
\end{equation}
when $x,y\in[a,b]$ and
\begin{equation}\label{eq:ahx}
ahx\in[v_0-\delta,v_0+\delta],
\end{equation}
when $x\in[a,b]$. Thus, we let $(u_0-\varepsilon)^{1/2}\leq a<b\leq (u_0+\varepsilon)^{1/2}$ such that $1>a/b\geq (v_0-\delta)/(v_0+\delta)$. Below, we shall transform the two integrals by the substitution $v=ah_0x$ for a special choice of $h=h_0$ to an integral over an interval $[w_1,w_2]$ having $v_0$ as midpoint. This $h_0$ is given by
\begin{equation}
h_0=\frac{2v_0}{a^2+ab}\in\left[\frac{v_0-\delta}{a^2},\frac{v_0+\delta}{ab}\right].
\end{equation}
Indeed, this $h_0$ satisfies~\eqref{eq:ahx} since
\begin{equation}
\begin{aligned}[b]
\frac{2v_0}{a^2+ab}\leq \frac{v_0+\delta}{ab}&\iff 2bv_0\leq (b+a)(v_0+\delta)\\
&\iff (1-\frac ab)v_0\leq (1+\frac ab)\delta \\
&\iff \frac ab\geq \frac{v_0-\delta}{v_0+\delta},
\end{aligned}
\end{equation}
and
\begin{equation}
\begin{aligned}[b]
\frac{2v_0}{a^2+ab}\geq \frac{v_0-\delta}{ab}&\iff 2av_0\leq (b+a)(v_0-\delta)\\
& \iff (\frac ab - 1)v_0\geq- (1+\frac ab)\delta \\
&\iff \frac ab\geq \frac{v_0-\delta}{v_0+\delta}.
\end{aligned}
\end{equation}
In the integrals in the inequality in~\eqref{eq:Tx} with $h=h_0$, we substitute $ah_0x=v$, and the inequality to be proved becomes
\begin{equation}
z_t:=\int_{w_1}^{w_2}z(v)t(v)\dd v < \int_{w_1}^{w_2}z(v)u(v)\dd v =:z_u.
\end{equation}
Here
\begin{equation}
w_1=a^2h_0,\quad w_2=abh_0
\end{equation}
so that $v_0=\tfrac12(a^2+ab)h_0$ is the midpoint of the integration interval $[w_1,w_2]\subset[v_0-\delta,v_0+\delta]$, and $t(v)$ and $u(v)$ are the pdf's
\begin{equation}\label{eq:tv}
t(v)=\frac{1}{ah_0}T\left(\frac{v}{ah_0}\right),\quad u(v)=\frac{1}{ah_0}U\left(\frac{v}{ah_0}\right)
\end{equation}
for which $t(v)/u(v)$ is strictly increasing in $v\in[w_1,w_2]$. We shall show that $z_u\in(M-\tfrac38\Delta,M+\tfrac38\Delta)$, and so, by~\eqref{eq:k-} and~\eqref{eq:k+},
\begin{equation}
z(v)-z_u>0,\quad w_1\leq v<v_0;\quad z(v)-z_u<0, v_0<v\leq w_2.
\end{equation}
With $R=t(v_0)/u(v_0)$, this implies that
\begin{equation}
z_t-z_u=\int_{w_1}^{w_2}(z(v)-z_u)(t(v)-Ru(v))\dd v<0,
\end{equation}
since the integrand is negative for all $v\neq v_0$.

To show that $z_u\in(M-\tfrac38\Delta,M+\tfrac38\Delta)$, we note that the pdf $u(v)$ is built from the function $l(v)$ in~\eqref{eq:lv} via~\eqref{e11} and~\eqref{eq:tv}. In terms of this $l(v)$ we can write $z_u$ as
\begin{equation}\label{eq:ku}
z_u=\frac{\int_{w_1}^{w_2}z(v)l(v)\dd v}{\int_{w_1}^{w_2}l(v)\dd v}.
\end{equation}
Now, by~\eqref{eq:abs},
\begin{equation}\label{eq:zwlow}
(w_2-w_1)l(v_0)(1-\lambda)\leq \int_{w_1}^{w_2}l(v)\dd v\leq (w_2-w_1)(1+\lambda)l(v_0).
\end{equation}
Also, by~\eqref{eq:k-},~\eqref{eq:k+} and~\eqref{eq:abs} and the fact that $v_0=\frac{1}{2}(w_1+w_2)$,
\begin{equation}\label{eq:intw}
\begin{aligned}[b]
\int_{w_1}^{w_2}z(v)l(v)\dd v &= \int_{w}^{v_0}z(v)l(v)\dd v +\int_{v_0}^{z}z(v)l(v)\dd v\\
&\leq z(v_0-0)\int_{w}^{v_0}l(v)\dd v +z(v_0+\frac 18 \Delta)\int_{v_0}^{z}l(v)\dd v\\
&\leq \frac 12 (w_2-w_1)(1+\lambda) l(v_0)\left[z(v_0-0)+z(v_0+0)+\frac18\Delta\right]\\
&=  (w_2-w_1)(1+\lambda) l(v_0)\left[M+\frac{1}{16}\Delta\right],
\end{aligned}
\end{equation}
and in a similar fashion
\begin{equation}\label{eq:kvlv}
\int_{w_1}^{w_2}z(v)l(v)\dd v \geq (w_2-w_1)(1-\lambda) l(v_0)\left[M-\frac{1}{16}\Delta\right].
\end{equation}
From~\eqref{eq:zwlow},~\eqref{eq:intw} and~\eqref{eq:kvlv} we then get
\begin{equation}
\frac{1-\lambda}{1+\lambda}(M-\frac{1}{16}\Delta)\leq z_u\leq \frac{1+\lambda}{1-\lambda}(M+\frac{1}{16}\Delta).
\end{equation}
Now
\begin{align}
	&\frac{1+\lambda}{1-\lambda}(M+\frac{1}{16}\Delta)<M+\frac{3}{8}\Delta \iff \lambda<\frac{\frac{5}{16}\Delta}{2M+\frac{7}{16}\Delta},\nonumber\\
	&\frac{1-\lambda}{1+\lambda}(M-\frac{1}{16}\Delta)<M+\frac{3}{8}\Delta \iff \lambda<\frac{\frac{5}{16}\Delta}{2M-\frac{7}{16}\Delta}.\nonumber
	\end{align}
	Then it follows from the choice of $\lambda$ in~\eqref{eq:abs} that $z_u\in(M-\tfrac38\Delta,M+\tfrac38\Delta)$ for such $\lambda$.

\section{Monotonicity properties for $C_{ab}(\tau)$}\label{sec:monotone}
In this appendix we show that $C_{ab}(\tau)$ is bounded from above by a closely related function that decreases in $\tau$. Notice that Proposition \ref{thm1} assumes $a$ and $b$ fixed. We have from~\eqref{mainc} and~\eqref{def2},~\eqref{def3} that
\begin{align}
a&=a(\tau) =\left(\frac{1}{N\mean{h}}\right)^{1/2}, \quad 
b= b(\tau) =(N\mean{h})^{\frac{3-\tau}{2(\tau-1)}} 
\end{align}
where $\mean{h}$ is given in~\eqref{eq:meanh}.
Below we shall use that $\mean{h}$ decreases in $\tau\in(2,3]$; this is clear intuitively and can be proved rigorously by using Lemma~\ref{lemma1}.
We have
\begin{equation}\label{eq:Cab}
C_{ab}(\tau) = A(\tau)G(\tau,a(\tau),b(\tau)),
\end{equation}
where
\begin{equation}
A(\tau) = \int_{h_{\min}}^{N}\rho(h)(1-(1+h)\e^{-h})\dd h,
\end{equation}
with density $\rho(h)=Ch^{-\tau}$ on $[h_{\min},N]$ 
and
\begin{equation}
G(\tau,a,b) = \frac{\int_{ah_{\min}}^{b}\int_{ah_{\min}}^{b}(xy)^{2-\tau}f(xy)\dd x \dd y}{\left(\int_{ah_{\min}}^{b}x^{1-\tau}\dd x\right)^2}.
\end{equation}
Proposition~\ref{thm1} says that
\begin{enumerate}
	\item[(i)] $G$ decreases in $\tau$ (when $a$ and $b$ are fixed).
\end{enumerate}
With the method of the proof of Proposition~\ref{thm1} in Section~\ref{sec:proof}, we will show that
\begin{enumerate}
	\item[(ii)] $A$ decreases in $\tau$,
	\item[(iii)] $G$ increases in $a$ and in $b$.
\end{enumerate}
Showing that $G(\tau,a(\tau),b(\tau))$ decreases in $\tau$ is complicated by the facts that $a(\tau)$ increases in $\tau$, see(iii), and that the dependence of $a(\tau)$ and $b(\tau)$ on $\tau$ is rather involved. Let $m$ and $M$ be the minimum and maximum, respectively, of $\mean{h}$ when $\tau\in[2,3]$ ($m=\mean{h}\mid_{h=3}\approx 2h_{\min}$, $M=\mean{h}\mid_{\tau\downarrow 2}\approx h_{\min}\ln(N/h_{\min})$ from~\eqref{eq:meanh} and the monotonicity of $\mean{h}$).
Letting
\begin{equation}
\bar{a}:=\left(N m \right)^{-1/2},\quad \bar{b}(\tau) = \left(NM \right)^{\frac{3-\tau}{2(\tau-1)}},
\end{equation}
we have $a(\tau)\leq \bar{a}$, $b(\tau)\leq \bar{b}(\tau)$, and so by (iii)
\begin{equation}\label{eq:Flow}
G(\tau,a(\tau),b(\tau))\leq G(\tau,\bar{a},\bar{b}(\tau)).
\end{equation}
The right-hand side of~\eqref{eq:Flow} decreases in $\tau$ by (i) and (iii) and the fact that $\bar{b}(\tau)$ decreases in $\tau$. Therefore, $C_{ab}(\tau)$ in~\eqref{eq:Cab} is bounded above by a closely related function that does decrease in $\tau$.

We have shown that $G(\tau,a,b)$ decreases in $\tau\in[2,3]$. We shall show now that
\begin{enumerate}
	\item [(ii)] $A$ decreases in $\tau$ and increases in $l$,
	\item [(iii)] $G$ increases in $a$ and in $b$.
\end{enumerate}

\begin{proof}[Proof that $A$ decreases in $\tau$]
	Since $\frac{\dd}{\dd \tau}h^{-\tau}= -h^{-\tau}\ln h$, we have
	\begin{equation}\label{eq:At}
	\begin{aligned}[b]
	\frac{\partial A}{\partial \tau}\leq 0 	&\iff \int_{h_{\min}}^{N}- h^{-\tau} (1-(1+h)\e^{-h})\ln h\dd h \int_{h_{\min}}^{N}h^{-\tau}\dd h\\
&\quad - \int_{h_{\min}}^{N}h^{-\tau} (1-(1+h)\e^{-h})\dd h  \int_{h_{\min}}^{N}- h h^{-\tau}\ln h\dd h\leq 0\\
&\iff \frac{\int_{h_{\min}}^{N} h^{-\tau} (1-(1+h)\e^{-h})\ln h\dd h }{\int_{h_{\min}}^{N}h^{-\tau} (1-(1+h)\e^{-h})\dd h }  \geq \frac{\int_{h_{\min}}^{N}h^{-\tau}\ln h\dd h}{\int_{h_{\min}}^{N} h^{-\tau}\dd h}.
	\end{aligned}
	\end{equation}
	Consider on $[h_{\min},N]$ the pdf's
	\begin{align}
	p(h) &= \frac{h^{-\tau} (1-(1+h)\e^{-h})}{\int_{h_{\min}}^{N}h_1^{-\tau} (1-(1+h_1)\e^{-h_1})\dd h_1 },\\
	 q(h) &= \frac{ h^{-\tau}}{\int_{h_{\min}}^{N} h_1^{-\tau}\dd h_1}=\rho(h).
	\end{align}
	Clearly $p(h)/q(h) = C(1-(1+h)\e^{-h})$ with $C$ independent of $h\in [h_{\min},N]$. Hence, $p(h)/q(h)$ is increasing in $[h_{\min},N]$. Also, $\ln(h)=g(h)$ is increasing in $[h_{\min},N]$. Hence, by Lemma~\ref{lemma1},
	\begin{equation}
	\int_{h_{\min}}^{N}g(h)p(h)\dd h \geq \int_{h_{\min}}^{N}g(h)q(h)\dd h,
	\end{equation}
	and this is the last inequality in~\eqref{eq:At}.
\end{proof}


\begin{proof}[Proof that $G$ increases in $b$]
	Again, for notational simplicity, we write $a$ and $b$ instead of $ah_{\min}$ and $bh_{\min}$ respectively.
	Let $\tau$ and $a$ be fixed, and set
	\begin{equation}
	p(x,y)=(xy)^{2-\tau}f(xy), \quad P(b,y)=\int_{a}^{b}p(x,y)\dd x.
	\end{equation}
	We have
\begin{equation}
\begin{aligned}[b]
\frac{\dd }{\dd b}\left[\int_{a}^{b}\int_{a}^{b}(xy)^{2-\tau}f(xy)\dd x \dd y\right]&=\frac{\dd }{\dd b}\left[\int_{a}^{b}P(b,y)\dd y\right] = P(b,b) +\int_{a}^{b}\frac{\partial P}{\partial b}(b,y)\dd y\\
&=\int_{a}^{b}p(x,b)\dd x + \int_{a}^{b}p(b,y)\dd y = 2\int_{a}^{b}p(x,b)\dd x\\
& = 2\int_{a}^{b}(xb)^{2-\tau}f(xb)\dd x
\end{aligned}
\end{equation}
because of symmetry of $p(x,y)$. Also,
\begin{equation}
\frac{\dd }{\dd b}\left(\int_{a}^{b}x^{1-\tau}\dd x\right)^2 = 2b^{1-\tau}\int_{a}^{b}x^{1-\tau}\dd x.
\end{equation}
Therefore,
\begin{equation}\label{eq:Fb}
\begin{aligned}[b]
	\frac{\partial G}{\partial b}\geq 0
	&\iff  2\int_{a}^{b}(xb)^{2-\tau}f(xb)\dd x\left(\int_{a}^{b}x^{1-\tau}\dd x\right)^2\\
	&\quad \quad - 2\int_{a}^{b}\int_{a}^{b}(xy)^{2-\tau}f(xy)\dd x \dd yb^{1-\tau}b^{1-\tau}\int_{a}^{b}x^{1-\tau}\dd x\geq 0\\
	&\iff \frac{\int_{a}^{b}(xb)^{2-\tau}f(xb)\dd x }{\int_{a}^{b}\int_{a}^{b}(xy)^{2-\tau}f(xy)\dd x \dd y } \geq \frac{b^{1-\tau}}{\int_{a}^{b}x^{1-\tau}\dd x}\\
	&\iff W(b)\geq V(b),
\end{aligned}
\end{equation}
where $W(x)$ and $V(x)$ are the pdf's as defined in~\eqref{e6} and~\eqref{e7}. Since $W(x)/V(x)$ increases in $x\in [a,b]$, we get
\begin{equation}
\begin{aligned}[b]
1& = \int_{a}^{b}W(x)\dd x= \int_{a}^{b}\frac{W(x)}{V(x)}V(x)\dd x \leq \frac{W(b)}{V(b)}\int_{a}^{b}V(x)\dd x = \frac{W(b)}{V(b)},
\end{aligned}
\end{equation}
and this gives the last inequality in~\eqref{eq:Fb}.
\end{proof}

\begin{proof}[Proof that $G$ increases in $a$]
	This proof is very similar to the proof that $G$ increases in $b$. Let $\tau$ and $b$ be fixed. We now have
	\begin{equation}
	\begin{aligned}[b]
	\frac{\dd }{\dd a}&\left[\int_{a}^{b}\int_{a}^{b}(xy)^{2-\tau}f(xy)\dd x \dd y\right]=-2 \int_{a}^{b}(xa)^{2-\tau}f(xa)\dd x,
	\end{aligned}
	\end{equation}
	and
	\begin{equation}
	\frac{\dd }{\dd a}\left(\int_{a}^{b}x^{1-\tau}\dd x\right)^2 = -2a^{1-\tau}\int_{a}^{b}x^{1-\tau}\dd x.
	\end{equation}
	Then, as in~\eqref{eq:Fb},
	\begin{equation}
	\begin{aligned}[b]
		\frac{\partial G}{\partial a}\geq 0 &\iff \frac{\int_{a}^{b}(xa)^{2-\tau}f(xa)\dd x }{\int_{a}^{b}\int_{a}^{b}(xy)^{2-\tau}f(xy)\dd x \dd y } \leq \frac{a^{1-\tau}}{\int_{a}^{b}x^{1-\tau}\dd x}\\
		&\iff W(a)\leq V(a),
	\end{aligned}
	\end{equation}
	and the inequality follows again from the increasingness of $W(x)/V(x)$.
\end{proof}

\section{Choice of the natural cutoff}\label{sec:natcut}
We consider random variables $\underline{h}_i$, $i=1,\dots,N$, that are i.i.d. with density $p(h)=Ch^{-\tau}$, $h_{\min}\leq h <\infty$. We have
\begin{equation}\label{eq:x1}
\mathbb{E}[\max_i\underline{h}_i]=h_{\min}\Gamma\left(\frac{\tau-2}{\tau-1}\right)\frac{\Gamma(N+1)}{\Gamma\left(N+\frac{\tau-2}{\tau-1}\right)}\approx h_{\min}\Gamma\left(\frac{\tau-2}{\tau-1}\right)N^{\frac{1}{\tau-1}}.
\end{equation}
The first identity in~\eqref{eq:x1} is exact, and follows on elaborating
\begin{equation}
\begin{aligned}[b]
\mathbb{E}[\max_i\underline{h}_i]&=\int_{h_{\min}}^{\infty}h\dd\left[\mathbb{P}^N(\underline{h}\leq h)\right]\\
&=\int_{h_{\min}}^{\infty}h\dd\left[(1-\mathbb{P}(\underline{h}> h))^N\right]= \int_{h_{\min}}^{\infty}h\dd\left[\left(1-\left(\frac{h}{h_{\min}}\right)^{1-\tau}\right)^N\right],
\end{aligned}
\end{equation}
using the substitution $t=(h/h_{\min})^{1-\tau}\in(0,1]$ and the expression of the $B$-function in terms of the $\Gamma$ function. The approximate identity in~\eqref{eq:x1} follows from $\Gamma(n+a)/\Gamma(n+b)\approx n^{a-b}$, which is quite accurate when $a,b\in[0,1]$ and $n$ large. 

Next, there is the inequality
\begin{equation}\label{eq:x3}
\left(\frac{\tau-1}{\tau-2}\right)^{\frac{1}{\tau-1}}\leq \Gamma\left(\frac{\tau-2}{\tau-1}\right)\leq \frac43 \left(\frac{\tau-1}{\tau-2}\right)^{\frac{1}{\tau-1}}, \quad 2\leq\tau\leq 3.
\end{equation}
This inequality follows from
\begin{equation}\label{eq:x4}
u^u\leq \Gamma(1+u)=u\Gamma(u)\leq \frac43 u^u,
\end{equation}
with $u=(\tau-2)/(\tau-1)\in(0,\tfrac12]$ that can be shown by considering the concave function $\ln(\Gamma(1+u))-u\ln u$ which vanishes at $u=0,1$ and is positive at $u=\tfrac{1}{2}$ (the upper bound in~\eqref{eq:x4} follows from a numerical inspection of this function). 

Then from~\eqref{eq:x3} we get, using $\mean{h}=h_{\min}(\tau-1)/(\tau-2)$, 
\begin{equation}
h_{\min}^{\frac{\tau-2}{\tau-1}}\left(N\mean{h}\right)^{\frac{1}{\tau-1}}\leq h_{\min}\Gamma\left(\frac{\tau-2}{\tau-1}\right)N^{\frac{1}{\tau-1}}\leq\frac43  h_{\min}^{\frac{\tau-2}{\tau-1}}\left(N\mean{h}\right)^{\frac{1}{\tau-1}},
\end{equation}
showing that the order of magnitude of $\mathbb{E}[\max_i\underline{h}_i]$ is $(N\mean{h})^{1/(\tau-1)}$. This motivates our choice of $h_c$ in~\eqref{def3}.

\end{document}